\def\biblio{\bibliography{duality}\bibliographystyle{alpha}}
\definecolor{dark-red}{rgb}{0.5,0.15,0.15}
\definecolor{dark-blue}{rgb}{0.15,0.15,0.6}
\definecolor{dark-green}{rgb}{0.15,0.6,0.15}
\renewcommand*{\backref}[1]{}
\renewcommand*{\backrefalt}[4]{%
  \ifcase #1 %
No citations.
  \or
(cit. on p. #2).%
  \else
(cit on pp. #2).%
  \fi%
}
\newtheorem{thm}{Theorem}[section]
\newtheorem{cor}[thm]{Corollary}
\newtheorem{prop}[thm]{Proposition}
\newtheorem{lem}[thm]{Lemma}
\newtheorem{conj}[thm]{Conjecture}
\theoremstyle{definition}
\newtheorem{defn}[thm]{Definition}
\theoremstyle{remark}
\theoremstyle{theorem}
\newtheorem*{thm*}{Theorem}
\let\c@equation\c@thm
\numberwithin{equation}{section}
\DeclareMathOperator{\Hom}{Hom}
\DeclareMathOperator{\End}{End}
\DeclareMathOperator{\cV}{\mathcal{V}}
\DeclareMathOperator{\id}{\mathrm{id}}
\DeclareMathOperator{\Spec}{Spec}
\DeclareMathOperator{\Mod}{Mod}
\DeclareMathOperator{\Stable}{Stable}
\DeclareMathOperator{\StMod}{StMod}
\DeclareMathOperator{\fib}{fib}
\newcommand{\Q}{\mathbb{Q}}
\newcommand{\Z}{\mathbb{Z}}
\Crefname{figure}{Figure}{Figures}
\Crefname{assu}{Assumption}{Assumptions}
\Crefname{lem}{Lemma}{Lemmas}
\Crefname{thm}{Theorem}{Theorems}
\Crefname{prop}{Proposition}{Propositions}
\newcommand{\fp}{\mathfrak{p}}
\newcommand{\fq}{\mathfrak{q}}
\newcommand{\recollement}[5]{
\xymatrix{{#1} \ar[r]|-{#2} & #3 \ar[r]|-{#4} \ar@<1ex>[l]^-{{#2}_!} \ar@<-1ex>[l]_-{{#2}^*} & #5, \ar@<1ex>[l]^-{{#4}!} \ar@<-1ex>[l]_-{{#4}^*}
}}
\let\lim\relax
\DeclareMathOperator{\lim}{lim}
\DeclareMathOperator{\Proj}{Proj}
\newcommand{\F}{\mathbb{F}}
\newcommand{\sm}{\wedge}
\DeclareMathOperator{\Tel}{Tel}
\title[Chromatic conjectures]{A short introduction to the telescope and chromatic splitting conjectures}
\author{Tobias Barthel}
\address{Department of Mathematical Sciences, University of Copenhagen, Universitetsparken 5, 2100 K{\o}benhavn {\O}, Denmark}
\email{tbarthel@math.ku.dk}
\thanks{This work is supported by the Danish National Research Foundation Grant DNRF92 and the European Unions Horizon 2020 research and innovation programme under the Marie Sklodowska-Curie grant agreement No. 751794. I would like to thank the Isaac Newton Institute (funded by EPSRC Grant number EP/R014604/1) for its hospitality during the Homotopy Harnessing Higher Structures Programme.}
\date{\today}
\begin{document}

\begin{abstract}
In this note, we give a brief overview of the telescope conjecture and the chromatic splitting conjecture in stable homotopy theory. In particular, we provide a proof of the folklore result that Ravenel's telescope conjecture for all heights combined is equivalent to the generalized telescope conjecture for the stable homotopy category, and explain some similarities with modular representation theory.
\end{abstract}

\maketitle

\setcounter{tocdepth}{1}
\tableofcontents
\def\biblio{}

This document contains a slightly expanded and updated version of an overview talk, delivered at the Talbot Workshop 2013 on chromatic homotopy theory, on two of the major open conjectures in stable homotopy theory: the telescope conjecture and the chromatic splitting conjecture. As such, these notes are entirely expositional and are not aimed to give a comprehensive account; rather, we hope some might see them as an invitation to the subject. 

We have augmented the original content of the talk by some material which is well-known to the experts but difficult to trace in the literature. In particular, we prove the folklore result that the telescope conjecture for all heights combined is equivalent to the classification of smashing Bousfield localizations of the stable homotopy category. In the final section, we discuss algebraic incarnations of chromatic structures in modular representation theory.  

We will assume some familiarity with basic notions from stable homotopy theory, and refer the interested reader to \cite{orangebook} as well as \cite{bb_handbook} for a more thorough discussion of chromatic homotopy theory. 

\subsection*{Acknowledgements}

I would like to thank Mike Hopkins and the participants of Talbot 2013 for several useful discussions on this topic as well as Malte Leip, Doug Ravenel, Gabriel Valenzuela, and the referee for comments on an earlier draft of this document. Furthermore, I am grateful to Haynes Miller and Norihiko Minami for encouraging me to revise my original notes. 

\section{Motivation: Freyd's generating hypothesis}

In 1966, Freyd~\cite{freyd_stable} proposed one of the most fundamental conjectures in stable homotopy theory:

\begin{conj}[Generating hypothesis] 
Let $f\colon X \to Y$ be a map of finite spectra with $\pi_*f = 0$, then $f$ is nullhomotopic.
\end{conj}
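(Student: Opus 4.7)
The plan is to attempt a cellular induction on the source $X$, which is the natural approach for any assertion about finite spectra. First, I would $p$-localize, using that a map of finite spectra is null iff each of its $p$-local components is null, and $\pi_*$ commutes with $p$-localization. Then I would induct on the number of cells of $X$. The base case $X=S^n$ is immediate: $[S^n,Y] = \pi_n Y$, and $\pi_n f$ sends the identity in $\pi_n S^n$ to $[f] \in \pi_n Y$, so $\pi_*f = 0$ forces $f=0$.

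For the inductive step, choose a cofiber sequence $S^{n-1} \xr{\phi} X' \xr{\iota} X \xr{q} S^n$ with $X'$ having one fewer cell than $X$. The restriction $f' := f \circ \iota$ satisfies $\pi_* f' = \pi_* f \circ \iota_* = 0$, so by induction $f' = 0$. Hence $f$ factors uniquely as $f = g \circ q$ for some $g\colon S^n \to Y$, and it remains to show $g=0$.

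The hypothesis $\pi_* f = 0$ forces $\pi_* g$ to vanish on the image of $q_*$. In degree $n$, the long exact sequence of the cofibration reads $\pi_n X \xr{q_*} \pi_n S^n = \Z \xr{\cdot [\phi]} \pi_{n-1} X'$, so $\mathrm{image}(q_*) = \mathrm{ann}_{\Z}([\phi])$. If $[\phi]$ has infinite order this image is zero; if $[\phi]$ has finite order $m$, the image is $m\Z$. We therefore only extract the weaker relation $m \cdot [g] = 0$ in $\pi_n Y$, rather than $[g]=0$.

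This is exactly where the naive induction breaks down, and in my view it is the crux of the problem: nontrivial torsion attaching maps permit torsion classes in $\pi_n Y$ to survive the argument, and torsion is ubiquitous in the $p$-local stable stems. Overcoming this obstruction appears to require genuinely new input beyond the formal triangulated structure of finite spectra---for example, fine control of the torsion filtration of $[X,Y]$ via the Adams or Adams--Novikov spectral sequence, or a detection theorem tailored to the specific classes $[g]$ that arise in the inductive step. It is precisely the absence of such a device that has kept Freyd's generating hypothesis open since 1966.
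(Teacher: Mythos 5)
You have correctly recognized that the statement is not a theorem: it is Freyd's generating hypothesis, which the paper explicitly records as ``completely open'' since 1966, with essentially no evidence in either direction. The paper offers no proof of it, so there is nothing to compare your argument against; the honest conclusion of your write-up---that the naive cellular induction cannot close---is exactly right, and matches the state of the art.

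Your analysis of where the induction breaks is also accurate. Given a cofiber sequence $S^{n-1} \xr{\phi} X' \to X \xr{q} S^n$ with $f|_{X'}$ null, $f$ factors as $g \circ q$, and the long exact sequence identifies the image of $q_*\colon \pi_n X \to \pi_n S^n \cong \Z$ with the annihilator ideal of $[\phi]$. When $[\phi]$ is a nonzero torsion class of order $m$, the hypothesis $\pi_*f = 0$ only yields $m[g]=0$ in $\pi_n Y$, not $[g]=0$, and since the $p$-local stable stems are almost entirely torsion this loss is fatal. You also implicitly use more than degree $n$: the constraint is $g_* \circ q_* = 0$ on all of $\pi_*X$, but none of this pins down $[g]$ itself, only its products with the image of $q_*$. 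One small refinement worth noting is that the factorization $f = g\circ q$ need not be \emph{unique}---two choices of $g$ differ by something pulled back along $q$ from $\pi_n Y$, i.e., by an element of $g_*$'s indeterminacy coming from $[S^n, Y]$ modulo the image of $[X',Y][1]$---but this ambiguity does not rescue the argument, since the whole coset can consist of nonzero classes. The only thing I would add is a pointer to the paper's actual contribution on this front: rather than attacking the conjecture directly, Devinatz and Hopkins reduce the case $Y=S^0$ to universal examples built from finite approximations of the Brown--Comenetz dual $I$, then feed those through the chromatic tower; this is precisely an instance of the ``detection theorem tailored to the specific classes $[g]$'' that you anticipate would be needed.
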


As of today, this hypothesis is completely open---since the computation of stable homotopy groups of finite complexes is notoriously difficult, there is essentially no evidence supporting either conclusion. However, one important statement that would follow if the hypothesis was true is that the map
\[
\pi_*\colon [X,Y] _* \longrightarrow \Hom_{\pi_*S^0}(\pi_*X,\pi_*Y)
\]
is an isomorphism for all finite spectra $X$ and $Y$, the target being the group of graded homomorphisms of $\pi_*S^0$-modules. The generating hypothesis also has a number of other curious consequences, see for example \cite{hovey_genhyp}.

In the early 1990s, Devinatz and Hopkins described a chromatic approach to the generating hypothesis in the special case when $Y=S^0$ is the sphere spectrum~\cite{devinatz_genhyp1}. We explain their idea first in the global setting. Suppose $f\colon X \to S^0$ is not null and write $f^{\vee}\colon S^0 \to DX$ for its Spanier--Whitehead dual; we have to show that $\pi_*f \neq 0$. If $f$ is of infinite order, then the question reduces to a rational statement, so suppose $f$ has finite order $d$. Recall that by Brown representability there exists a spectrum $I$ with 
\[
[W,I] \cong \Hom_{\Z}(\pi_0W,\Q/\Z)
\]
for all $W$, the so-called Brown--Comenetz dual of the sphere spectrum. 

This can be used to reduce the generating hypothesis with target $S^0$ to a set of universal examples, a strategy reminiscent of the proof of the nilpotence theorem \cite{nilpotence1}. Indeed, there is a map $f_d\colon [X,S^0] \to \Q/\Z$ sending $f$ to $1/d$. By construction of $I$, $f_d$ corresponds to a map $f_d\colon DX \to I$. Writing $I$ as a directed colimit of finite spectra $I^{\alpha}$, we see that $f_d$ factors through some $f_d^{\alpha}\colon DX \to I^{\alpha}$, i.e., there is a commutative diagram
\[
\xymatrix{S^0 \ar[r]^-{f^{\vee}} \ar[dr]_{1/d} & DX \ar[d]^{f_d} \ar@{-->}[rd]^-{f_d^{\alpha}} \\
& I & I^{\alpha}. \ar[l]}
\]
Spanier--Whitehead duality gives a map $(f_d^{\alpha})^{\vee}\colon DI^{\alpha} \to X$ such that the composite
\[
\xymatrix{DI^{\alpha} \ar[r]^-{(f_d^{\alpha})^{\vee}} & X \ar[r]^-{f} & S^0}
\]
is not nullhomotopic and depends only on $\alpha$ and $d$. Therefore, it suffices to prove the claim for these universal examples $DI^{\alpha} \to S^0$. 

In order to deal with them, we need to construct suitable models for the $I^{\alpha}$ and then prove the generating hypothesis for these examples. Instead of running this programme for $S^0$ directly, Devinatz and Hopkins propose to use the chromatic convergence theorem \cite{orangebook,chromaticcompletion}, which says that, $p$-locally, $S^0$ is equivalent to the limit of the chromatic tower
\begin{equation}\label{eq:chromatictower}
\xymatrix{\ldots \ar[r] & L_nS^0 \ar[r] & L_{n-1}S^0 \ar[r] &  \ldots \ar[r] & L_1S^0 \ar[r] & L_{0}S^0 \simeq S_{\Q}^0,}
\end{equation}
where $L_n$ denotes $E(n)$-localization (reviewed below). It consequently suffices to prove an analogue of the generating hypothesis for the $E(n)$-local analogues of the universal examples considered above, for each height $n\ge 0$. The filtration steps of the chromatic tower are built out of the monochromatic layers $M_nS^0 = \fib(L_nS^0 \to L_{n-1}S^0)$, which leads to the study of $IM_nS^0$ via Gross--Hopkins duality \cite{grosshopkins1} and the $K(n)$-local Picard group \cite{hms_pic}. The original approach relied on the telescope conjecture as well as the chromatic splitting conjecture in order to control the universal examples, and it has been carried out successfully at height 1~\cite{devinatz_genhyp1}:

\begin{thm}[Devinatz]
If $p>2$ and $f\colon X \to S^0$ a map between $p$-local finite spectra with $\pi_*f = 0$, then $L_1f$ is nullhomotopic.
\end{thm}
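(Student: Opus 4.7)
The plan is to implement the Devinatz--Hopkins reduction sketched above at height $n=1$ for odd primes, where all the necessary chromatic inputs are available.

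First, I would apply $L_1$ throughout the universal example construction of the excerpt to reduce the theorem to the analogous statement for each universal map $DI^{\alpha} \to S^0$: it suffices to show that $L_1$-localization kills every such universal map which itself has vanishing stable homotopy (equivalently, factors through an $f$ with $\pi_* f = 0$). This shifts the entire analysis into the $E(1)$-local stable homotopy category.

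Second, I would use the chromatic fracture square
\[
\xymatrix{L_1 W \ar[r] \ar[d] & L_{K(1)} W \ar[d] \\ L_0 W \ar[r] & L_0 L_{K(1)} W}
\]
to split the problem into a rational and a $K(1)$-local piece. The rational piece is immediate: for finite $W$ one has $\pi_* L_0 W = \pi_* W \otimes \Q$, and the rational stable category is semisimple, so $\pi_* f = 0$ forces $L_0 f = 0$. Everything therefore reduces to proving that the $K(1)$-localization of each universal map is null.

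Third, the crucial input is Gross--Hopkins duality at height one, which identifies $L_{K(1)} IM_1 S^0$ with an explicit shift of $L_{K(1)} S^0$ by an element of the $K(1)$-local Picard group---cyclic and sphere-representable at odd primes by \cite{hms_pic}. Combined with the telescope conjecture at height $1$ for $p>2$ (due to Miller), one can identify each $L_{K(1)} DI^{\alpha}$ as an invertible object of the $K(1)$-local category, so that the universal maps become morphisms between explicitly understood $K(1)$-local spectra.

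The principal obstacle is the final detection step: verifying that these universal maps, produced abstractly from $I$, are nontrivial on $\pi_*$ after $K(1)$-localization. This is where $p>2$ is essential---at odd primes $L_{K(1)} S^0$ is the fiber of $\psi^g - 1\colon \pc{KU} \to \pc{KU}$ for a topological generator $g$ of $\Z_p^\times/\{\pm 1\}$, its positive-degree homotopy is the image-of-$J$ pattern, and every nonzero class is detected by $K$-theory. Nonvanishing of the universal maps can therefore be verified by a $K$-theoretic Hurewicz computation, contradicting $\pi_* f = 0$ on the given universal example. At $p=2$ the extra $\eta$-families obstruct this clean detection, which explains the exclusion of the prime $2$ from the hypothesis.
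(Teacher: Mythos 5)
There is a genuine gap: your write-up reproduces the reduction strategy that the survey sketches (and which the theorem's actual source \cite{devinatz_genhyp1} implements), but the two steps that carry all the mathematical weight are asserted rather than proved, and as stated they are not correct. First, the detection step: at an odd prime the nonzero classes of $\pi_*L_{K(1)}S^0$ outside degrees $0$ and $-1$ are torsion image-of-$J$ classes, and these are \emph{not} seen by any $K$-theoretic Hurewicz map, since $KU_*$ of a sphere is torsion-free; detecting them requires the $e$-invariant/$K(1)$-local Adams spectral sequence together with an explicit identification of the homotopy type of the localized universal examples $L_1DI^{\alpha}$. Producing usable models for the finite approximations $I^{\alpha}$ and carrying out exactly this verification is the content of Devinatz's paper, so ``can be verified by a $K$-theoretic Hurewicz computation'' is precisely the point at which the proof is missing, not a routine final check. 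Second, the Gross--Hopkins/Picard step is misapplied: Gross--Hopkins duality identifies $L_{K(1)}IM_1S^0$, the $K(1)$-localization of the Brown--Comenetz dual of the monochromatic layer, as an invertible $K(1)$-local spectrum; it says nothing about the finite stages $I^{\alpha}$ of the global Brown--Comenetz dual $I$, and the spectra $L_{K(1)}DI^{\alpha}$ are in general not invertible objects. So the claim that the universal maps become morphisms between ``explicitly understood'' invertible $K(1)$-local spectra is unjustified.

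There is also a logical issue in the first reduction. To prove that $\pi_*f=0$ forces $L_1f\simeq 0$ by the universal-example method, you must argue contrapositively: assuming $L_1f\not\simeq 0$, you need a functional on $[X,L_1S^0]$ (finite-order and infinite-order cases treated separately) detecting $L_1f$, which means running the Brown--Comenetz argument with an $E(1)$- or $K(1)$-local dualizing object and its finite approximations, not simply ``applying $L_1$ throughout'' the integral construction; the factorization through a finite stage and the passage back by Spanier--Whitehead duality have to be redone in the local setting, where the relevant targets are no longer finite spectra. None of this is fatal to the strategy---it is the strategy of \cite{devinatz_genhyp1}---but your proposal stops exactly where the proof begins, and the two concrete claims singled out above would need to be repaired before the outline could be completed.
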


In response to subsequent progress on the telescope conjecture and the chromatic splitting conjecture as outlined in the next sections, Devinatz describes a modified approach in \cite{devinatz_genhyp2}, which appears to be the current state of the art.

\section{Recollections on Bousfield localization}

Throughout this section, we will implicitly work locally at a fixed prime $p$. Let $E$ be a spectrum. A spectrum $X$ is called $E$-acyclic if $E \sm X \simeq 0$ and $X$ is called $E$-local if any map from an $E$-acyclic spectrum into $X$ is nullhomotopic. Moreover, a map $f\colon X \to Y$ is called an $E$-equivalence if $E \sm f$ is an equivalence or, equivalently, if the fiber of $f$ is $E$-acyclic. A localization functor is an endofunctor $L$ of the stable homotopy category together with a natural transformation $\eta\colon \id \to L$ such that $L\eta\colon L \to L^2$ is an equivalence and $L\eta \simeq \eta L$. Based on ideas of Adams, Bousfield \cite{bousfield_locspectra} rigorously constructed a localization functor which forces the $E$-equivalences to be invertible; more precisely:

\begin{thm}[Bousfield, 1979]
If $E$ is a spectrum, there is a localization functor $L_E$ on the stable homotopy category together with a natural transformation $\eta_E\colon \id \to L_E$ such that, for any spectrum $X$, the map $\eta_E(X)\colon X \to L_EX$ exhibits $L_EX$ as the initial $E$-local spectrum with a map from $X$. The functor $L_E$ is called Bousfield localization at $E$ and the fiber $C_E$ of $\eta_E$ is called $E$-acyclization. 
\end{thm}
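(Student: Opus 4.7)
The plan is to follow Bousfield's original strategy: a transfinite small object argument whose essential input is a cardinality bound on $E$-acyclic spectra. The main obstacle is purely set-theoretic, since the class of $E$-acyclic spectra is a proper class and one cannot naively iterate over all of them to kill them off; once this is resolved, the rest of the construction is a standard application of cellular approximation in a stable context.

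First I would establish the key technical lemma: there is a regular cardinal $\kappa$, depending only on $E$ (for instance any $\kappa$ strictly larger than the cardinality of a CW-representative of $E$ and of $\pi_*S^0$), such that every $E$-acyclic spectrum $A$ is the filtered colimit of its $E$-acyclic subspectra of cardinality at most $\kappa$. The argument proceeds by iteratively enlarging an arbitrary subcomplex $A' \subseteq A$: since $E_*$ commutes with filtered colimits and any class in $E_*A'$ which vanishes in $E_*A$ already vanishes on a $\kappa$-small extension, one inductively builds a tower $A' \subseteq A'' \subseteq \ldots$, each step of cardinality at most $\kappa$, whose colimit is $E$-acyclic. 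This is the heart of the proof and the only genuinely nontrivial step.

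Given the lemma, let $\mathcal{S}$ be a set of representatives for the isomorphism classes of $E$-acyclic spectra of cardinality at most $\kappa$. For each $X$, construct $L_E X$ by a small object argument: set $X_0 = X$ and, for each ordinal $\alpha$, define $X_{\alpha+1}$ as the cofiber of the assembly map
\[
\bigvee_{A \in \mathcal{S}} \bigvee_{A \to X_\alpha} A \longrightarrow X_\alpha,
\]
taking filtered colimits at limit ordinals, and stop at a regular ordinal $\lambda$ larger than $\kappa$ and than the cardinalities of the elements of $\mathcal{S}$. Set $L_E X := X_\lambda$ and let $\eta_E(X) \colon X \to L_E X$ be the canonical map. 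Each transition map is an $E$-equivalence, since cofibers of $E$-acyclics are $E$-acyclic, and naturality in $X$ is built into the construction.

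Finally I would verify locality and the universal property. The cardinality lemma shows that any map $A \to L_E X$ from an arbitrary $E$-acyclic $A$ factors through some $\kappa$-small $E$-acyclic subspectrum $A' \in \mathcal{S}$; regularity of $\lambda$ and smallness of $A'$ then force the map to factor through some $X_\alpha$ with $\alpha < \lambda$, where it becomes null at stage $\alpha+1$ by construction. Hence $L_E X$ is $E$-local and $\eta_E(X)$ is an $E$-equivalence. The universal property of the initial $E$-local object under $X$ then follows formally: any map from $X$ into an $E$-local $Y$ extends uniquely up to homotopy across $\eta_E(X)$ by lifting along the transfinite tower, since at each stage the obstructions live in $\Hom$-groups out of $E$-acyclics into $Y$ and therefore vanish. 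The identities $L\eta \simeq \eta L$ and the fact that $L\eta$ is an equivalence are immediate consequences of the uniqueness part of this universal property applied to $L_E X$ itself.
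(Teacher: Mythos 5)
The paper does not actually prove this theorem: it is quoted from Bousfield's paper \cite{bousfield_locspectra}, with only the remark that the proof hinges on producing a set of suitable generators for the category of $E$-acyclics, and your sketch (the cardinality lemma plus a transfinite small-object construction and the formal verification of the universal property) is exactly that standard argument, so it matches the approach the paper points to. The one imprecision worth fixing is in the locality check: a map from an arbitrary $E$-acyclic $A$ into $L_EX$ does not literally factor through a $\kappa$-small acyclic subspectrum; rather, one writes $A$ as the filtered colimit of its $\kappa$-small acyclic subspectra and uses a Milnor ${\lim}^1$ sequence to reduce to maps out of small acyclics, which then factor through some stage $X_\alpha$ and become null in $X_{\alpha+1}$ as you describe.
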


It follows formally that $\eta_E(X)$ is also the terminal $E$-equivalence out of $X$. The proof of this theorem relies on verifying the existence of a set of suitable generators for the category of $E$-acyclics. It is an open problem \cite[Conj.~9.1]{hovey_structure_1999} whether every localization functor on the stable homotopy category arises as localization with respect to some spectrum $E$. 

The fiber sequence $C_E \to \id \to L_E$ can be thought of as providing a way to decompose the stable homotopy category into two subcategories in a well-behaved way. We might therefore ask for a classification of all Bousfield localizations. The first result in this direction was proven by Ohkawa~\cite{ohkawa}. To state it, recall that two spectra $E$ and $F$ are said to be Bousfield equivalent if they have identical categories of acyclics, i.e., $\ker(L_E) = \ker(L_F)$. The corresponding equivalence class of $E$ is denoted by $\langle E \rangle$, so we have $\langle E \rangle = \langle F \rangle$ if and only if $L_E \simeq L_F$. As usual, we define $\langle E \rangle \vee  \langle F \rangle = \langle E \vee F \rangle$ and $\langle E \rangle \sm  \langle F \rangle = \langle E \sm F \rangle$.

\begin{thm}[Ohkawa]
The collection of Bousfield classes of spectra forms a set of cardinality at least $2^{\aleph_0}$ and at most $2^{2^{\aleph_0}}$.
\end{thm}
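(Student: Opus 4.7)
The plan is to treat the two bounds separately. For the lower bound $2^{\aleph_0}$, I would exhibit an explicit injective family of pairwise distinct Bousfield classes indexed by subsets of $\mathbb{N}$. Using the Morava $K$-theories $K(n)$ at the fixed prime $p$, which satisfy the orthogonality $K(n)\wedge K(m)\simeq 0$ for $m\ne n$, every subset $S\subseteq\mathbb{N}$ defines $E_S=\bigvee_{n\in S}K(n)$. Since $E_S\wedge K(m)\simeq K(m)$ precisely when $m\in S$ and vanishes otherwise, the class $\langle E_S\rangle$ determines $S$, producing $2^{\aleph_0}$ pairwise distinct Bousfield classes.

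For the upper bound $2^{2^{\aleph_0}}$, the strategy is to show that Bousfield classes are determined by their restriction to a fixed small set of ``small'' test spectra. Let $\mathcal{S}_0$ denote the set of weak equivalence classes of CW-spectra with countably many cells; since each such spectrum is specified by countably many attaching maps taking values in countable homotopy groups of finite wedges of spheres, a routine counting argument gives $|\mathcal{S}_0|\le 2^{\aleph_0}$. I would then consider the assignment
\[
\Phi(\langle E\rangle) \;=\; \{X\in\mathcal{S}_0 : E\wedge X\simeq 0\}\;\in\;\mathcal{P}(\mathcal{S}_0),
\]
and prove that $\Phi$ is injective, which would immediately yield the desired bound of $2^{2^{\aleph_0}}$.

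The crux---and the principal obstacle---is establishing injectivity of $\Phi$. Suppose $\langle E\rangle\ne\langle F\rangle$ and choose $Y$ witnessing this, say with $E\wedge Y\simeq 0$ but $F\wedge Y\not\simeq 0$; the goal is to produce a countable $Z$ with the same two properties. Writing $Y=\colim Y_\alpha$ as a filtered colimit over its countable subspectra and using that smashing commutes with filtered colimits, any nontrivial class in $\pi_*(F\wedge Y)$ is already detected on some countable subspectrum $Y_\beta$, so one can arrange $F\wedge Y_\beta\not\simeq 0$. The difficulty is that $E\wedge Y_\beta$ need not vanish even though $E\wedge Y$ does: the elements of $\pi_*(E\wedge Y_\beta)$ merely die eventually in the colimit. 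To overcome this, I would construct $Z\supseteq Y_\beta$ inside $Y$ by a transfinite induction of countable length, at each stage enlarging the current subspectrum to kill a specified element of $\pi_*(E\wedge(-))$ while preserving the chosen distinguishing class in $F\wedge(-)$. The bookkeeping here is delicate, especially if $E$ has uncountable cell structure, since then $\pi_*(E\wedge Y_\beta)$ can be uncountable and the induction must be interleaved with a countable approximation of $E\wedge Y_\beta$; managing this balance is the technical heart of Ohkawa's original argument, while modern treatments exploit accessibility properties of the stable homotopy category. Together with the lower bound above, this establishes the theorem.
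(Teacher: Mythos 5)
The paper states Ohkawa's theorem with only a citation to Ohkawa's original article and does not give a proof, so there is no in-paper argument to compare against. Assessing the proposal on its own terms:

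The lower bound is correct and standard. The Morava $K$-theories at a fixed prime satisfy $K(n)\wedge K(m)\simeq 0$ for $n\ne m$ while $K(m)\wedge K(m)\not\simeq 0$, so $S\mapsto\langle\bigvee_{n\in S}K(n)\rangle$ injects $\mathcal{P}(\mathbb{N})$ into the Bousfield lattice. (A small slip: $E_S\wedge K(m)$ is a nonzero wedge of suspensions of $K(m)$ rather than $K(m)$ on the nose, but this does not affect the argument.)

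For the upper bound you identify the right high-level strategy --- control Bousfield classes by a fixed set of countable test spectra --- but the key step, injectivity of $\Phi$, is precisely where the theorem lives, and the transfinite induction you outline does not close. The obstruction you flag is genuine and is not just delicate bookkeeping. If $Z_n$ is countable, $\pi_*(E\wedge Z_n)$ can be uncountable (take $E$ to be a wedge of $\aleph_1$ spheres), and no countable extension $Z_{n+1}\supset Z_n$ inside $Y$ can kill all of it. Moreover the proposed remedy of ``interleaving with a countable approximation of $E\wedge Y_\beta$'' does not rescue the argument: replacing $E$ by a countable subspectrum $E'\subset E$ changes the question, since $E'\wedge Z\simeq 0$ says nothing about $E\wedge Z$, as $E$ can still detect $Z$ through cells outside $E'$. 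Ohkawa's proof (and the later expositions by Dwyer--Palmieri and Casacuberta--Guti\'errez--Rosick\'y) sidesteps this obstruction rather than overcoming it head-on: roughly, one uses a finer invariant --- recording, for pairs of countable spectra $A\subset B$, whether $\pi_*(E\wedge A)\to\pi_*(E\wedge B)$ vanishes --- which is still valued in a set of size $2^{\aleph_0}$ but for which one never needs to annihilate an entire uncountable group in a single countable step; alternatively one invokes well-generatedness of the category of spectra. Your sketch supplies neither of these ingredients, so the upper bound is not established.
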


In light of this result, a classification of all Bousfield localizations does not seem to be feasible, see \cite{hovey_structure_1999} for some partial results. Instead, we will single out two particularly well-behaved families among all Bousfield localizations:

\begin{defn}
A localization functor $L$ is called smashing if it commutes with set-indexed direct sums or, equivalently, if the natural transformation $X \sm LS^0 \to LX$ is an equivalence for all spectra $X$. Moreover, $L$ is finite if there exists a collection of finite spectra that generates the category $\ker(L)$ of $L$-acyclics.
\end{defn}

Miller \cite{miller_finiteloc} proves that any finite localization is smashing and any smashing localization functor $L$ is equivalent to Bousfield localization at $LS^0$ by \cite{ravenel_localization}, so from now on all localization functors we consider are assumed to be Bousfield localizations.

\section{The telescope conjecture}

We start with some examples of finite and smashing localizations; as before, everything is implicitly localized at a prime $p$. Let $K(n)$ and $E(n)$ be the $n$th Morava $K$-theory and $n$th Johnson--Wilson theory, respectively, with coefficients
\[
K(n)_* = \F_{p}[v_{n}^{\pm 1}] \qquad \text{and} \qquad E(n)_*=\Z_{(p)}[v_1,\ldots,v_{n-1},v_{n}][v_n^{-1}],
\]
where $v_i$ is of degree $2(p^i-1)$. By \cite{ravenel_localization}, if a finite spectrum $F$ is $K(n)$-acyclic, then it is also $K(n-1)$-acyclic\footnote{In fact, as long as $n>1$, this result has been extended to all suspension spectra by Bousfield \cite{bousfield_kneqspaces}. For $n=1$, a counterexample is given by $K(\Z,3)$.}; since $\langle E(n)\rangle = \langle \bigvee_{i=0}^nK(i)\rangle$, this spectrum $F$ is then also $E(n)$-acyclic. A finite spectrum $F$ is of type $n$ if $n$ is minimal with the property that $K(n)_*(F) \neq 0$, and such a finite number $n$ exists for any nontrivial finite spectrum. By the periodicity theorem \cite{nilpotence2}, any finite type $n$ spectrum $F$ admits an (essentially unique) $v_n$-self map, and we write $\Tel(F) = F[v_n^{-1}]$ for the associated telescope. It then follows from the thick subcategory theorem \cite{nilpotence2} that the Bousfield class of $\Tel(F)$ depends only on $n$, so we will also write $\Tel(n)$ for $\Tel(F)$. 

\begin{defn}
Let $n\ge 0$, then we define two localization functors on the stable homotopy category by
\[
L_n^f = L_{\Tel(0) \vee \Tel(1) \vee \ldots \vee \Tel(n)} \qquad \text{and} \qquad L_n = L_{E(n)} \simeq L_{K(0) \vee K(1) \vee \ldots \vee K(n),}
\]
referred to as the finite $L_n$-localization and $L_n$-localization, respectively.
\end{defn}

As the terminology suggests, the functors $L_n^f$ are in fact finite localizations, with $\ker(L_n^f)$ generated by any finite type $(n+1)$-spectrum \cite{mahowaldsadofsky, ravenel_liftafter}. It then follows from the thick subcategory theorem that any finite localization functor of the category of spectra which is not equal to the identity or the zero functor must be one of the $L_n^f$. Their key features are summarized in the next proposition, see~\cite{ravenel_liftafter, mahowaldsadofsky, miller_finiteloc}. 

\begin{prop}[Mahowald--Sadofsky, Miller, Ravenel]
For each $n$, the functor $L_n^f$ is a finite and thus smashing localization. If $F$ is a finite type $n$ spectrum then $L_n^fF \simeq \Tel(F)$.
\end{prop}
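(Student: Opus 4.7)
The plan is to introduce an auxiliary finite localization $L'$ whose kernel is the localizing subcategory generated by any finite spectrum $F_0$ of type $n+1$; this is independent of the choice of $F_0$ by the thick subcategory theorem, and smashing by Miller's theorem. The strategy is to prove the identification $L' \simeq L_n^f$ together with the equivalence $L'F \simeq \Tel(F)$ for every finite type $n$ spectrum $F$, from which both halves of the proposition follow.

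First I would verify the kernel inclusion $\ker L' \subseteq \ker L_n^f$ by checking that every finite type $(n+1)$ spectrum $F_0$ is $\Tel(i)$-acyclic for each $i \le n$. Letting $G$ be a finite type $i$ spectrum with $v_i$-self map $v$, the K\"unneth isomorphism for Morava $K$-theory gives $K(j)_*(G \wedge F_0) = 0$ for all $j \le n$, so $G \wedge F_0$ is finite of type $\ge n+1$. The self-map $v \wedge F_0$ is then nilpotent by the nilpotence theorem, hence $\Tel(i) \wedge F_0 \simeq \hocolim(G \wedge F_0 \xrightarrow{v \wedge F_0} \cdots) \simeq 0$.

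Next, to compute $L'F$ for $F$ a finite spectrum of type $n$, I would fix a $v_n$-self map $v$ on $F$ and observe that the cofiber $F/v^k$ has type $\ge n+1$ (since $K(n)_*(v^k)$ is an isomorphism while $K(j)_*F = 0$ for $j < n$), hence lies in $\ker L'$. Thus $L'v^k$ is an equivalence, and since $L'$ is smashing it commutes with the sequential colimit, giving $L'F \simeq L'\Tel(F)$. To see that $\Tel(F)$ is already $L'$-local, it suffices to check $[G, \Tel(F)] = 0$ for every finite type $(n+1)$ spectrum $G$; Spanier--Whitehead duality converts this to $\pi_0(DG \wedge \Tel(F))$, and the same nilpotence mechanism applied to $DG \wedge F$ (again of type $\ge n+1$ by K\"unneth) makes it vanish. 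Therefore $L'F \simeq \Tel(F)$.

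To upgrade this to the global identification $L' \simeq L_n^f$, one needs the reverse kernel inclusion $\ker L_n^f \subseteq \ker L'$. Since $L'$ is smashing, Ravenel's theorem gives $L' \simeq L_{L'S^0}$, so the claim reduces to the Bousfield-class equality $\langle L'S^0 \rangle = \langle \bigvee_{i \le n} \Tel(i) \rangle$; one inequality is the previous paragraph, and for the other I would proceed by induction on $n$, using that the analogously defined finite localizations assemble into a tower $L'_0 \to \cdots \to L'_n$ and that the cofiber/nilpotence computation controls the Bousfield class contributed at each monochromatic layer via $L'_k F_k \simeq \Tel(F_k)$ for a type $k$ finite $F_k$. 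I expect this final step---matching Bousfield classes globally rather than just recovering $\Tel(F)$ on finite spectra of the top type---to be the main technical hurdle, since the direct cofiber arguments see only one height at a time and the chromatic reassembly requires the inductive recollement between $L_{n-1}^f$ and the $n$-th finite monochromatic piece.
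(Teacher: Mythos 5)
Your first two steps are correct and are essentially the standard argument from the literature (the paper itself gives no proof of this proposition, deferring to Miller, Mahowald--Sadofsky and Ravenel): the nilpotence-theorem argument showing that finite spectra of type $\ge n+1$ are $\Tel(i)$-acyclic for all $i \le n$, and the cofiber-of-$v^k$ computation together with the locality check $[G,\Tel(F)]_* \cong \pi_*(DG \wedge \Tel(F)) = 0$ identifying $L'F \simeq \Tel(F)$ for $F$ of type exactly $n$, are both complete as written.

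The gap is the step you yourself flag and leave as an expectation: the inclusion $\ker(L_n^f) \subseteq \ker(L')$, equivalently $\langle L'S^0 \rangle \le \langle \Tel(0) \vee \cdots \vee \Tel(n) \rangle$. Without it neither half of the proposition is proved, because in the paper $L_n^f$ is \emph{defined} as Bousfield localization at the wedge of telescopes, so the finiteness claim is exactly the identification $L' \simeq L_n^f$. The missing idea that closes your induction is a structural property of finite acyclization rather than any further monochromatic or recollement analysis: by Miller's construction, $C'_{n-1}S^0 = \fib(S^0 \to L'_{n-1}S^0)$ lies in the localizing subcategory $\Loc(F_n)$ generated by a finite type $n$ spectrum $F_n$. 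Since $L'_n$ is smashing, applying it to the cofiber sequence $C'_{n-1}S^0 \to S^0 \to L'_{n-1}S^0$ identifies $\fib(L'_nS^0 \to L'_{n-1}S^0) \simeq L'_n C'_{n-1}S^0$, which therefore lies in $\Loc(L'_nF_n) = \Loc(\Tel(F_n))$ by your step 2; as $\{Y : X \wedge Y \simeq 0\}$ is localizing, this gives $\langle \fib(L'_nS^0 \to L'_{n-1}S^0) \rangle \le \langle \Tel(n) \rangle$. Combining with $\langle L'_nS^0 \rangle \le \langle L'_{n-1}S^0 \rangle \vee \langle \fib(L'_nS^0 \to L'_{n-1}S^0) \rangle$ and the inductive hypothesis $\langle L'_{n-1}S^0 \rangle \le \langle \Tel(0) \vee \cdots \vee \Tel(n-1) \rangle$ (base case $L'_0S^0 \simeq S^0_{\Q} \simeq \Tel(0)$) yields the desired inequality. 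As written, your proposal stops exactly before this point, so the identification $L' \simeq L_n^f$---and with it the proposition---is not yet established, although the skeleton you describe is the right one and the missing step is short once the property of $C'_{n-1}S^0$ is invoked.
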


Having classified all finite localizations, we now turn to the a priori larger set of smashing localizations. The smash product theorem \cite{orangebook} and its proof establish the first part of the next result:

\begin{thm}[Hopkins--Ravenel]
For any $n\ge 0$, the localization functor $L_n$ is smashing.  
\end{thm}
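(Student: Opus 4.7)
The plan is to follow the Hopkins--Ravenel argument laid out in \cite{orangebook}. Being smashing is equivalent to the assertion that for every spectrum $X$ the canonical map $L_n S^0 \wedge X \to L_n X$ is an equivalence. Since the fiber $C_n = \fib(\eta_{E(n)})$ is $E(n)$-acyclic, so is $C_n \wedge X$, and hence this map is automatically an $E(n)$-localization provided its target $L_n S^0 \wedge X$ is itself $E(n)$-local. The theorem therefore reduces to showing that the smash product of $L_n S^0$ with an arbitrary spectrum remains $E(n)$-local.

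My first step would be to resolve $L_n S^0$ via the $BP$-based Amitsur resolution $S^0 \to BP^{\wedge (\bullet + 1)}$, whose totalization (after $L_n$-localization) recovers $L_n S^0$. The technical heart of the proof is then to establish a horizontal vanishing line for this resolution: there is an integer $N = N(n,p)$ such that the $L_n$-local $BP$-Adams tower for $S^0$ is $N$-skeletal, in the sense that $\Tot^N \simeq L_n S^0$. This is the main obstacle, and it is the content of the smash product theorem proper. The standard route is Morava's change-of-rings theorem, which identifies the $E_2$-page of the $E(n)$-local Adams spectral sequence with the continuous group cohomology $H^*_c(\mathbb{G}_n; \pi_* E_n)$ of the Morava stabilizer group, and one then exploits its finite virtual cohomological dimension. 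A careful analysis of differentials, proceeding by induction on the chromatic height and using the monochromatic fiber sequences built from $K(0), \ldots, K(n)$, transports the $E_2$-vanishing to a bound on Adams filtration in the abutment.

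Once $L_n S^0$ is realized as a finite totalization $\Tot^N$ of a cosimplicial diagram whose terms are built from $BP^{\wedge k}$ (which are $E(n)$-local after suitable refinement using Landweber exactness), smashing with an arbitrary $X$ yields a finite limit of $E(n)$-local spectra, which is again $E(n)$-local. This proves that $L_n S^0 \wedge X$ is $E(n)$-local for every $X$, and hence that $L_n$ is smashing. I expect the cohomological input about $\mathbb{G}_n$ and the careful control of differentials at small primes to be by far the most difficult part of the argument; everything else is essentially a formal consequence of having a finite filtration.
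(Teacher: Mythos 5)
The paper does not actually prove this statement: it is the smash product theorem, which the survey simply attributes to Hopkins--Ravenel and cites from \cite{orangebook}, so there is no internal argument to compare against. Your outline follows the same source and is broadly faithful to it: the reduction to showing that $L_nS^0 \sm X$ is $E(n)$-local is exactly the standard first step, and the technical heart is indeed a uniform horizontal vanishing line at a \emph{finite page} of the localized Adams--Novikov spectral sequence, which lets one realize $L_nS^0$ (up to retract --- the conclusion is really that $L_nS^0$ is a retract of a finite stage built from $L_nBP$- or $E(n)$-module spectra, i.e.\ that $S^0$ is $E(n)$-prenilpotent, rather than literally $\Tot^N \simeq L_nS^0$) inside the thick subcategory generated by module spectra, which is closed under smashing with arbitrary $X$ and consists of local objects. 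Two imprecisions are worth flagging. First, the $E_2$-page of the $E(n)$-local Adams spectral sequence for $S^0$ is not $H^*_c(\bG_n;\pi_*E_n)$; that identification belongs to the $K(n)$-local setting, and in the $L_n$-local setting the change-of-rings only applies layerwise through the chromatic filtration, involving all stabilizer groups of heights $\le n$. Second, at small primes there is no vanishing line at $E_2$ at all (the relevant cohomological dimension is infinite), so one cannot ``transport the $E_2$-vanishing''; the vanishing line only appears at a later page, and producing it is where Hopkins--Ravenel's induction over towers of generalized Smith--Toda complexes and the nilpotence technology enter. Since you explicitly defer this step rather than prove it, your text is an accurate road map of the cited argument rather than a proof, which is the same level of detail the paper itself offers.
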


There is a natural transformation $L_n^f \to L_n$ which is an equivalence on all $MU$-module spectra and all $L_i$-local spectra for any $i \ge 0$, as shown in \cite{hovey_csc, hovey_morava_1999}. In other words, there is a close relationship between the functors $L_n$ and their finite counterparts $L_n^f$. As explained in \cite{orangebook}, if the two localizations were in fact equivalent for all $n$, then two naturally arising filtrations on the stable homotopy groups of spheres would coincide, making the computation of $\pi_*S^0$ more amenable to algebraic techniques. This led Ravenel~\cite{ravenel_localization} to:

\begin{conj}[Telescope conjecture]
For any $n \ge 0$, the natural map $L_n^f \to L_n$ is an equivalence. 
\end{conj}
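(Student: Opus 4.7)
The plan is to exploit the fact that both sides are smashing localizations, so by Miller's characterization the natural transformation $L_n^f \to L_n$ is an equivalence if and only if every $E(n)$-local spectrum is $(\Tel(0) \vee \cdots \vee \Tel(n))$-local. Since each $\Tel(i)$ for $i \le n$ is automatically $E(n)$-local (as its Bousfield class lies below that of $E(n)$), one containment is formal, and the problem is to establish the reverse: show that any $E(n)$-acyclic is also telescope-acyclic. A natural way to organize the argument is by induction on $n$, with $n=0$ trivial (both functors are rationalization). For the inductive step, assume $L_{n-1}^f \simeq L_{n-1}$ and contemplate the fiber of $L_n^f \to L_n$. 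Since both agree after $L_{n-1}$-localization by the inductive hypothesis, the chromatic fracture principle reduces the task to comparing the $K(n)$-local and telescopic monochromatic layers.

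Concretely, I would try to prove that for every finite type $n$ spectrum $F$ with $v_n$-self map $v$, the canonical map $\Tel(F) = F[v^{-1}] \to L_{K(n)}F$ is an equivalence. Both sides are in principle accessible via Adams-type spectral sequences: the $K(n)$-local side is governed by the Morava module spectral sequence $H^*_c(\mathbb{G}_n; (E_n)_* F) \Rightarrow \pi_* L_{K(n)}F$, while the telescope side admits an Adams spectral sequence based on a suitable connective ring spectrum that detects $v_n$-periodic phenomena. The strategy is then to produce a map of spectral sequences from telescope to $K(n)$-local that is an isomorphism on the appropriate localized $E_2$-pages, and rule out differentials or extensions that could distinguish them.

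The main obstacle is precisely this comparison of Adams spectral sequences. At height $n=1$, Mahowald (at $p=2$, using $bo$) and Miller (at odd primes, using the image-of-$J$ spectrum) exploit the existence of small, explicitly computable connective ring spectra that approximate $L_1$-localization and whose Adams spectral sequence for $\Tel(F)$ can be pinned down completely; the proposition in the text confirming $L_n^f F \simeq \Tel(F)$ for type $n$ spectra already reduces the work to this comparison. For $n \ge 2$ no analogous small approximating ring is known, and the telescope-based spectral sequence is expected to contain higher-filtration contributions, arising from bounded-below phenomena, which have no counterpart in the $K(n)$-local setting. Partial computations of Mahowald--Ravenel--Shick at height $2$ in fact indicate candidate extra classes on the telescope side, so honestly I would expect the proof strategy to break down here, and I would treat the $n \ge 2$ case as the genuinely open and likely most delicate part of the problem.
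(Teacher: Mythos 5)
There is no proof to compare against: the statement you were asked about is the Telescope Conjecture itself, which the paper records as a conjecture, explicitly notes is open for all $n \ge 2$ and all primes, and moreover reports is generally believed to be \emph{false}. So no argument along the lines you sketch --- or any other lines --- appears in the paper, and your proposal cannot be judged as a proof of the statement.

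On its own terms, what you write is a reasonable reconstruction of the known landscape rather than a proof. The reduction to showing $\Tel(F) \simeq L_{K(n)}F$ for a single finite type $n$ spectrum (equivalently $\langle \Tel(F)\rangle = \langle K(n)\rangle$, or convergence of the Adams--Novikov spectral sequence for $\Tel(F)$) is exactly the relative formulation recorded in the paper's proposition following Mahowald--Ravenel--Shick, and the $n=0$ and $n=1$ cases you describe are the Bousfield--Mahowald--Miller theorem already cited there. But the inductive step for $n \ge 2$ is not a step you postpone --- it is the entire content of the conjecture, and you correctly concede it breaks down: no small approximating ring spectrum analogous to $bo$ or the image-of-$J$ spectrum is available, and the Mahowald--Ravenel--Shick analysis suggests the telescopic side carries extra classes that the $K(n)$-local side cannot see, which is evidence \emph{against} the statement rather than an obstacle to be engineered around. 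In short, your proposal honestly identifies where it fails, but as a proof attempt it establishes nothing beyond the cases the paper already attributes to others, and the expected resolution of the remaining cases is a disproof, not a proof.
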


For $n=0$, both $L_0^f$ and $L_0$ identify with rationalization. Based on explicit computations of the homotopy groups of $L_1S^0/p$ and $L_1^fS^0/p = \Tel(S^0/p)$ by Mahowald ($p=2$, \cite{mahowald_bo}) and Miller ($p>2$, \cite{miller_adams}), Bousfield \cite{bousfield_locspectra} deduced: 

\begin{thm}[Bousfield, Mahowald, Miller]
The telescope conjecture holds at height $n=1$. 
\end{thm}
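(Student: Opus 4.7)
The plan is to reduce the equivalence $L_1^f S^0 \simeq L_1 S^0$ to the level of the mod-$p$ Moore spectrum and then verify that reduced statement by comparing two explicit homotopy-group calculations.

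For the reduction, I observe that because $L_1$ and $L_1^f$ are smashing and we are implicitly working $p$-locally, the fiber $C$ of the natural map $L_1^f S^0 \to L_1 S^0$ satisfies $L_1 C = 0$, and in particular $H\Q \sm C = 0$. If in addition $C \sm S^0/p \simeq 0$, then $p$ is invertible on $C$, so $\pi_*C$ is simultaneously a $\Z_{(p)}$-module and a $\Z[1/p]$-module, hence a $\Q$-vector space with trivial rationalization, and therefore zero. It thus suffices to prove that $L_1^f S^0/p \to L_1 S^0/p$ is an equivalence. Since $S^0/p$ is finite of type $1$ and admits a $v_1$-self map (a $v_1^4$-self map at $p=2$), the proposition attributed to Mahowald--Sadofsky/Miller/Ravenel identifies the left-hand side with the telescope $\Tel(S^0/p)$, while a standard chromatic fracture argument identifies the right-hand side with $L_{K(1)} S^0/p$. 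The problem is reduced to establishing $\Tel(S^0/p) \simeq L_{K(1)} S^0/p$.

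Next I would compute $\pi_* L_{K(1)} S^0/p$ via descent from $p$-complete $K$-theory. The $K(1)$-local Adams--Novikov $E_2$-page is the continuous cohomology of the Morava stabilizer group $\Z_p^{\times}$ acting on $K^{\wedge}_{p,*}/p$; this collapses (modulo a short extension analysis) to a fully explicit $v_1$-periodic pattern. The heart of the argument is then the independent determination of $\pi_*\Tel(S^0/p) = v_1^{-1}\pi_* S^0/p$. For $p > 2$ one invokes Miller's analysis of the Adams spectral sequence for $S^0/p$ in \cite{miller_adams}: inverting $v_1$ on the $E_2$-term and tracking the very few surviving differentials yields an $E_\infty$-page that can be matched termwise with the $K(1)$-local answer. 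For $p = 2$ one replaces the classical Adams spectral sequence by Mahowald's $bo$-based resolution of $S^0/2$ in \cite{mahowald_bo}, which delivers comparable control despite the complications caused by the Hopf element $\eta$ and the higher power of $v_1$ required for the self-map.

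Finally, with both sides computed one compares them along the natural map $\Tel(S^0/p) \to L_{K(1)} S^0/p$ (induced by the universal property of $K(1)$-localization) and verifies that it induces an isomorphism on homotopy groups; Bousfield's synthesis in \cite{bousfield_locspectra} then assembles this into the desired equivalence $L_1^f \simeq L_1$. The principal obstacle is the computation of $v_1^{-1}\pi_* S^0/p$: the formal reduction and the $K(1)$-local calculation are essentially structural, but pinning down the $v_1$-periodic homotopy of the Moore spectrum — and especially managing the $\eta$-bookkeeping at the prime $2$ — is the substantive input that makes the theorem nontrivial.
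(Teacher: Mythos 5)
Your proposal is correct and matches the route the paper (implicitly) takes: the paper doesn't reprove the theorem but cites exactly the same inputs—Miller's Adams spectral sequence computation of $v_1^{-1}\pi_*S^0/p$ for $p>2$ \cite{miller_adams}, Mahowald's $bo$-resolution computation for $p=2$ \cite{mahowald_bo}, and Bousfield's synthesis \cite{bousfield_locspectra}—after reducing to the statement $\Tel(S^0/p)\simeq L_1S^0/p$, which is formulation (2) of the cited equivalent versions of the height-$(n{-}1)$-to-$n$ step (the paper uses $L_0^f\simeq L_0$ to pass from $S^0$ to $S^0/p$ where you give a direct fiber argument, but these amount to the same reduction).
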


One might thus hope for an inductive approach to the telescope conjecture, passing from height $n-1$ to height $n$. The corresponding relative version admits a number of equivalent formulations, see~\cite{mrs_triple}:

\begin{prop}
Let $n\ge 1$ and suppose $F$ is finite of type $n$, then the following are equivalent:
	\begin{enumerate}
		\item If $L_{n-1}^f \simeq L_{n-1}$, then $L_n^f \simeq L_n$.
		\item $\Tel(F) \simeq L_nF$.
		\item $\langle \Tel(F) \rangle = \langle K(n) \rangle$.
		\item The Adams--Novikov spectral sequence for $\Tel(F)$ converges to $\pi_*\Tel(F)$.
	\end{enumerate}
\end{prop}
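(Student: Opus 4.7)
My plan is to establish the equivalences with (2) as the pivot: first I would prove (2) $\Leftrightarrow$ (3), then (2) $\Leftrightarrow$ (1), and finally (3) $\Leftrightarrow$ (4). The key preliminary reductions, which I would record at the start, exploit that $F$ is of type $n$: $F$ is both $E(n-1)$-acyclic and $L_{n-1}^f$-acyclic, so the chromatic and finite chromatic fracture squares degenerate to give $L_nF \simeq L_{K(n)}F$ and $L_n^fF \simeq L_{T(n)}F \simeq \Tel(F)$. Moreover, a quick Bousfield class calculation gives $\langle L_nF \rangle = \langle E(n) \rangle \sm \langle F \rangle = \langle K(n) \rangle$, using that $K(i) \sm F \simeq 0$ for $i<n$.

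For (2) $\Leftrightarrow$ (3), the strategy is to identify the telescope conjecture at height $n$ with a Bousfield class equality. If $\langle \Tel(F) \rangle = \langle K(n) \rangle$, then $\langle \Tel(n) \rangle = \langle K(n) \rangle$, so the Bousfield localization functors $L_{\Tel(n)}$ and $L_{K(n)}$ coincide; applying to $F$ gives $\Tel(F) \simeq L_{\Tel(n)}F \simeq L_{K(n)}F \simeq L_nF$, which is (2). Conversely, if $\Tel(F) \simeq L_nF$, then $\langle \Tel(F) \rangle = \langle L_nF \rangle = \langle K(n) \rangle$ by the calculation above.

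For (1) $\Leftrightarrow$ (2), I would use the chromatic fracture formalism. In the direction (2) $\Rightarrow$ (1), assuming $L_{n-1}^f \simeq L_{n-1}$, the comparison $L_n^f \to L_n$ reduces via the fracture squares to the comparison $L_{\Tel(n)} \to L_{K(n)}$; the equivalence (2) gives this as an equivalence on $F$, and a thick subcategory argument, using that both functors are smashing, upgrades this to $L_n^f \simeq L_n$. For (1) $\Rightarrow$ (2), the same fracture reasoning in reverse shows that when the antecedent holds, (1) immediately yields $L_n^fF \simeq L_nF$, i.e., (2); the equivalence is to be read in the spirit of the inductive setup of the telescope conjecture at height $n$.

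For (3) $\Leftrightarrow$ (4), the $E_2$-pages of the Adams--Novikov spectral sequences for $\Tel(F)$ and $L_nF$ both compute $\Ext_{BP_*BP}(BP_*, v_n^{-1}BP_*F)$, and the ANSS for $L_nF$ converges by $BP$-nilpotent completeness of $L_n$-local spectra. Hence the ANSS for $\Tel(F)$ converges iff the natural map $\Tel(F) \to L_nF$ is a $\pi_*$-isomorphism, iff it is an equivalence, iff (2)/(3) hold. \textbf{The main obstacle} is twofold: showing that $\Tel(F) \simeq L_{\Tel(n)}F$ for $F$ of type $n$, used implicitly in (2) $\Leftrightarrow$ (3), requires the nontrivial fact that $\Tel(F)$ is already $\Tel(n)$-local; and the ANSS convergence argument in (3) $\Leftrightarrow$ (4) relies on a careful analysis of $BP$-nilpotent completion for telescopes of type $n$ finite spectra.
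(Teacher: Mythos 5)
The paper does not actually supply a proof of this proposition: it is stated with a bare reference to Mahowald--Ravenel--Shick \cite{mrs_triple}, so there is no in-text argument to compare your proposal against. I will therefore evaluate the proposal on its own terms.

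Your preliminary reductions and the equivalence $(2) \Leftrightarrow (3)$ are sound; the identifications $L_nF \simeq L_{K(n)}F$ and $L_n^fF \simeq L_{\Tel(n)}F \simeq \Tel(F)$ for $F$ finite of type $n$, and the Bousfield class computation $\langle L_nF\rangle = \langle K(n)\rangle$, are all standard and you use them correctly. The $(3)\Leftrightarrow(4)$ sketch is also essentially right: the $E_2$-pages agree because $L_n^f\to L_n$ is an equivalence on $BP$-module spectra (the paper itself records this), and the $BP$-nilpotent completion of $\Tel(F)$ is $L_nF$, so convergence of the ANSS for $\Tel(F)$ is equivalent to $\Tel(F)\to L_nF$ being an equivalence.

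The genuine problem is in the handling of $(1)\Leftrightarrow(2)$. You write that ``a thick subcategory argument, using that both functors are smashing, upgrades this to $L_n^f\simeq L_n$,'' where ``both functors'' refers (per your preceding sentence) to $L_{\Tel(n)}$ and $L_{K(n)}$. Neither of these is smashing: $L_{K(n)}$ fails to commute with coproducts already for $n\ge 1$, and the same is true of $L_{\Tel(n)}$. The argument for $(2)\Rightarrow(1)$ should instead be routed through $L_n^f$ and $L_n$, which \emph{are} smashing: given $L_{n-1}^f\simeq L_{n-1}$, the fiber of $L_n^fS^0\to L_nS^0$ is $\Tel(n)$-acyclic and $K(n)$-acyclic as soon as $\langle\Tel(F)\rangle=\langle K(n)\rangle$, and since $\ker(L_n^f)\subseteq\ker(L_n)$ automatically, one concludes $\ker(L_n)\subseteq\ker(L_n^f)$ by the Bousfield class identity $\langle L_n^fS^0\rangle=\langle L_{n-1}S^0\rangle\vee\langle\Tel(n)\rangle=\langle L_{n-1}S^0\rangle\vee\langle K(n)\rangle=\langle L_nS^0\rangle$. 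Your appeal to the ``fracture square'' reduction is not wrong in spirit, but as written it rests on a false claim.

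The direction $(1)\Rightarrow(2)$ is also not resolved. Item (1) is literally a material conditional, so it is vacuously true whenever $L_{n-1}^f\not\simeq L_{n-1}$, in which case one cannot extract (2) from it by any ``reverse fracture'' argument. You flag this yourself (``to be read in the spirit of the inductive setup''), which is honest, but it is not a proof. What is really being asserted in \cite{mrs_triple} is a height-$n$ \emph{local} statement ($\langle\Tel(n)\rangle=\langle K(n)\rangle$), and (1) is just its reformulation as an inductive step; the equivalence of (1) with (2)--(4) is then definitional modulo the Bousfield-class bookkeeping above, not something to be deduced from the literal truth-table of the implication. A careful write-up would say this explicitly rather than leave it implicit.
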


Note that, by the thick subcategory theorem, a single example or counterexample that is finite of type $n$ is enough to settle the passage from height $n-1$ to height $n$. 

For $n= 2$ and $p\ge 5$, Ravenel~\cite{ravenel_progress} began the analogue of Miller's height 1 calculation for $V(1)=S^0/(p,v_1)$, attempting to show that the telescope conjecture is false in these cases, but this computation has not yet been completed due to its considerable complexity. In \cite{mrs_triple}, Mahowald, Ravenel, and Shick describe an alternative approach based on a spectrum $Y(n)$ such that $\pi_*L_nY(n)$ is finitely generated over $R(n)_* = K(n)_*[v_{n+1},\ldots,v_{2n}]$, but $\pi_*L_n^fY(n)$ can only be finitely generated over $R(n)_*$ if there is a ``bizarre pattern of differentials'' in the corresponding localized Adams spectral sequence. Thus, if these patterns could be ruled out, we would disprove the telescope conjecture at heights $n\ge 2$. At this time, the telescope conjecture is still open for all $n\ge 2$ and all $p$, and generally believed to be false.

\section{Classification of smashing Bousfield localizations}

This section discusses the classification of smashing Bousfield localizations of the ($p$-local) stable homotopy category. In particular, we prove that the telescope conjecture for all heights $n$ is equivalent to the so-called generalized telescope conjecture (or generalized smashing conjecture). Since this material is more technical than the rest of this survey, the reader may want to skip ahead to the conclusion at the end of this section. We start with two lemmas, the first of which is reminiscent of the type classification of finite spectra. 

\begin{lem}\label{lem:1}
Let $L$ be a smashing localization functor. If $LK(n) \not\simeq 0$, then $LK(n-1) \not\simeq 0$. 
\end{lem}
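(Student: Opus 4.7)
My plan is to argue by contrapositive: assume $LK(n-1)\simeq 0$ and deduce $LK(n)\simeq 0$. Because $L$ is smashing, $LX \simeq LS^0 \sm X$ for every spectrum $X$, so the hypothesis reads ``$LS^0$ is $K(n-1)$-acyclic'' and the goal reads ``$LS^0$ is $K(n)$-acyclic.''

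The strategy is to invoke the Hopkins--Smith thick subcategory theorem. Since $L$ is smashing, $\ker L$ is a localizing $\otimes$-ideal of the stable homotopy category, so $\ker L \cap \Sp^\omega$ is a thick $\otimes$-ideal of finite $p$-local spectra, and hence equals $C_m := \{F : \mathrm{type}(F)\geq m\}$ for some $m \in \{0,1,\ldots,\infty\}$. The key point is to show $m \leq n$: granting this, any finite type-$n$ spectrum $F$ is $L$-acyclic, and so $LS^0 \sm F \sm K(n)\simeq 0$. But $F\sm K(n)$ is a nonzero $K(n)$-module (because $F$ has type exactly $n$), hence has Bousfield class $\langle K(n)\rangle$, which forces $LS^0\sm K(n)\simeq 0$, i.e., $LK(n)\simeq 0$.

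So the heart of the argument is to exhibit a single $L$-acyclic finite spectrum of type $n-1$; by the thick subcategory theorem this gives $C_{n-1}\subseteq \ker L$ and a fortiori $C_n \subseteq\ker L$. Choose a finite type-$(n-1)$ spectrum $F$ with $v_{n-1}$-self map $v\colon \Sigma^{d}F\to F$ (which exists, after possibly passing to a power, by the periodicity theorem). The cofiber $F/v$ has type $\geq n$, while the telescope $\Tel F = F[v^{-1}]$ has Bousfield class concentrated at height $n-1$. From the cofiber sequence $\Sigma^d F \xrightarrow{v} F \to F/v$ and the telescope construction, one obtains the Bousfield-class inequality $\langle F\rangle \leq \langle F/v\rangle \vee \langle \Tel F\rangle$, which after smashing with $LS^0$ becomes $\langle LF\rangle \leq \langle L(F/v)\rangle \vee \langle L\Tel F\rangle$. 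The summand $L\Tel F$ is $K(m)$-acyclic for every $m$: for $m\neq n-1$ because $\Tel F$ itself is, and for $m = n-1$ because $LS^0$ is, by hypothesis.

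The hardest step, and the one I expect to be the main obstacle, is to upgrade this $K(\ast)$-acyclicity of $L\Tel F$ to honest vanishing, since being $K(m)$-acyclic for all $m$ does not in general imply vanishing (dissonant spectra exist). The natural way to close this gap is to use that $L$ is smashing more forcefully: rewriting $L\Tel F \simeq \Tel(LF)$, one should argue that the action of $v$ on the $LS^0$-module $LF$ is sufficiently nilpotent that its telescope vanishes, perhaps via a relative nilpotence theorem for module spectra over the smashing ring $LS^0$, or else by exploiting the ring structure on $LS^0$ together with chromatic convergence inside the $L$-local subcategory. Once $L\Tel F\simeq 0$ is established, combining it with the Bousfield-class bound above and the cofiber sequence $\Sigma^d LF\to LF\to L(F/v)$ should imply $LF\simeq 0$, completing the proof of the lemma via the reduction outlined above.
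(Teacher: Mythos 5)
Your opening reduction is fine: for a smashing $L$, an $L$-acyclic finite spectrum of type exactly $n$ does force $LK(n)\simeq 0$, and the thick subcategory theorem correctly reduces you to exhibiting one $L$-acyclic finite spectrum of type $n-1$. Of your two remaining steps, the one you flag as the main obstacle is actually the fixable one: take $F=DF'\sm F'$ a finite type-$(n-1)$ ring spectrum and replace $v$ by a central power, so that $\Tel F$, and hence $LS^0\sm\Tel F$, is a ring spectrum (note also that $\Tel F$ is $K(m)$-acyclic for $m=\infty$ as well, since a $v_{n-1}$-self map is nilpotent in mod $p$ homology); the ordinary nilpotence theorem for ring spectra then gives $L\Tel F\simeq 0$, with no need for a relative version over $LS^0$.

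The genuine gap is the last step, which you do not flag: from $L\Tel F\simeq 0$, i.e.\ $\Tel(LF)\simeq 0$, together with $\langle LF\rangle\le\langle L(F/v)\rangle$, you cannot conclude $LF\simeq 0$. The cofiber sequence $\Sigma^dLF\to LF\to L(F/v)$ yields $LF\simeq\Tel(LF)\simeq 0$ only once you know $L(F/v)\simeq 0$; but $F/v$ has type exactly $n$, so that is precisely the acyclicity statement you are trying to produce. (Abstractly, any object with a nilpotent self map has vanishing telescope and the same Bousfield class as the cofiber of that self map without being zero; here $\Tel(LF)\simeq 0$ just says $v$ is nilpotent on the compact $L$-local object $LF$ and carries no information about $K(n)$.) Structurally, all of your manipulations extract consequences at heights $\le n-1$ from the hypothesis $LK(n-1)\simeq 0$; nothing in the outline crosses from height $n-1$ up to height $n$. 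The implication you need—$LK(n-1)\simeq 0$ implies every finite spectrum of type $\ge n-1$ is $L$-acyclic—is exactly \Cref{lem:2}, whose proof in the paper uses \Cref{lem:1}, so filling the hole that way would be circular. The substantive input the paper uses instead, arguing in the direct rather than contrapositive direction, is the Bousfield class computation $\langle L_{K(n)}S^0\rangle=\langle K(0)\vee\cdots\vee K(n)\rangle$ of \cite[Cor.~2.4]{hovey_csc}: if $LK(n)\not\simeq 0$, then $K(n)$, being a $K(n)$-module, is a retract of $LK(n)$ and hence $L$-local, so $L_{K(n)}S^0$ is $L$-local, and smashingness gives $\langle LS^0\rangle\ge\langle L_{K(n)}S^0\rangle\ge\langle K(n-1)\rangle$, whence $LK(n-1)\not\simeq 0$. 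Some such downward-looking chromatic fact about the $K(n)$-local sphere is indispensable, and your outline never supplies one.
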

\begin{proof}
Suppose $LK(n) \not\simeq 0$. Since $K(n) \sm LS^0 \simeq LK(n)$ is a module over $K(n)$ and hence splits into a wedge of shifted copies of $K(n)$, we see that $K(n)$ is $L$-local and thus the canonical map $K(n) \to LK(n)$ is an equivalence. This implies that  $\langle LS^0 \rangle \ge \langle K(n) \rangle$: Indeed, if $X \sm LS^0 \simeq 0$, then $0 \simeq X \sm LS^0 \sm K(n) \simeq X \sm K(n)$ as well. 

The next claim is that $\langle LS^0 \rangle \ge \langle \bigvee_{i=0}^{n}K(i) \rangle$. To this end, note that $L_{K(n)}S^0$ is $K(n)$-local, hence $LS^0$-local. Because $L$ is smashing, we get an equality $\langle LS^0 \sm L_{K(n)}S^0 \rangle = \langle L_{K(n)}S^0 \rangle$, which then yields
\begin{equation}\label{eq:bc}
\langle LS^0 \rangle \ge \langle LS^0 \sm L_{K(n)}S^0 \rangle = \langle L_{K(n)}S^0 \rangle = \langle \textstyle{\bigvee_{i=0}^{n}}K(i) \rangle,
\end{equation}
where the last equality is \cite[Cor.~2.4]{hovey_csc}. Therefore, we have $LK(n-1) \not\simeq 0$.
\end{proof}

The proof of the next lemma requires the nilpotence theorem.

\begin{lem}\label{lem:2}
Suppose $L$ is a smashing localization and $n\ge 0$, then $LK(n) \simeq 0$ if and only if any finite spectrum of type at least $n$ is in $\ker(L)$.
\end{lem}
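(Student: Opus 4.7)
For the implication $(\Leftarrow)$, I would pick a finite type $n$ spectrum $F$, which exists by the periodicity theorem. By hypothesis $F \in \ker(L)$, and since $\ker(L)$ is a smashing ideal closed under filtered colimits, the telescope $\Tel(F) = F[v_n^{-1}]$ associated to any $v_n$-self map of $F$ also lies in $\ker(L)$. Smashing with $K(n)$ preserves $\ker(L)$, so $K(n) \sm \Tel(F) \in \ker(L)$. Because $v_n$ acts invertibly on $K(n)_*F$, one has $K(n) \sm \Tel(F) \simeq K(n) \sm F$, which is a nonzero wedge of shifted copies of $K(n)$. Extracting any summand as a retract then yields $LK(n) \simeq 0$.

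For the converse $(\Rightarrow)$, I would first apply \Cref{lem:1} to promote $LK(n) \simeq 0$ to $LK(m) \simeq 0$ for all $m \geq n$. By the thick subcategory theorem, the $L$-acyclic finite spectra form a thick subcategory of the form $C_{m_0}$, so it suffices to exhibit one finite type $n$ spectrum $F$ with $LF \simeq 0$ in order to conclude $m_0 \leq n$. For such an $F$ I would consider the ring spectrum $R := F \sm DF \sm LS^0 \simeq \End_{LS^0}(LF)$. A routine calculation shows $K(m) \sm R \simeq 0$ for every finite $m$: when $m < n$ because $K(m) \sm F \simeq 0$, and when $m \geq n$ because $K(m) \sm LS^0 \simeq 0$. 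Granted the analogous vanishing at $K(\infty) = H\F_p$, the nilpotence theorem forces the unit $1 \in \pi_0 R$ to be nilpotent, so $R \simeq 0$; the Spanier--Whitehead duality zigzag then identifies $F \sm LS^0$ with a retract of $R \sm F \simeq 0$, finishing the argument.

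The main obstacle will be supplying the $H\F_p$-vanishing input to the nilpotence theorem, since \Cref{lem:1} as stated only controls $K(m)$ for finite $m$. The cleanest route I foresee is a field-style retract argument: if $LH\F_p$ were nonzero, then $H\F_p$ would split off as an $L$-local retract, forcing $H_*(LS^0;\F_p)$ to be concentrated in degree zero; an Atiyah--Hirzebruch collapse should then produce a nonzero class in $K(n)_*(LS^0)$, contradicting $LK(n) \simeq 0$. Alternatively, one might replace the $K$-theoretic form of the nilpotence theorem by the $MU$-form and argue via $MU \sm LS^0$, though this route has its own subtleties. I expect this $K(\infty)$ step to be the most delicate part of the proof.
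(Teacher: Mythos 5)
Your two implications follow the same route as the paper's. For the direction deducing $LK(n)\simeq 0$, the paper is more direct: if $F$ is finite of type $n$ with $LF\simeq 0$, then $F\sm LK(n)\simeq K(n)\sm LF\simeq 0$ by the smashing property, and since $K(n)$ is a retract of the $K(n)$-module spectrum $LK(n)$ whenever the latter is nonzero, $LK(n)\not\simeq 0$ would force $K(n)\sm F\simeq 0$, contradicting the type of $F$; your detour through $\Tel(F)$ is correct but invokes the periodicity theorem unnecessarily. For the other direction, your ring $R=DF\sm F\sm LS^0$, the nilpotence theorem, and the duality retract are exactly the paper's argument (the paper phrases it as replacing $F$ by $\End(F)$ and applying nilpotence to $LF$).

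The genuine gap is the one you flag, the input $H\F_p\sm LF\simeq 0$ at $K(\infty)$, and neither of your proposed patches works. First, $H\F_p$ splits off $LH\F_p=H\F_p\sm LS^0$, not off $LS^0$, so nothing forces $H_*(LS^0;\F_p)$ to be concentrated in degree zero. More fundamentally, the inference from nonzero mod $p$ homology to nonzero $K(n)$-homology via an Atiyah--Hirzebruch collapse is only valid for finite spectra; for general spectra it is false, and $H\F_p$ itself is the standard counterexample: $K(m)\sm H\F_p\simeq 0$ for all finite $m$, the differentials (generated by the Milnor primitives) killing everything, so no contradiction with $LK(n)\simeq 0$ can be extracted this way. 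The $MU$-variant is left undeveloped. The correct repair is to rerun the proof of \Cref{lem:1} with $H\F_p$ in place of $K(n)$: if $LH\F_p\not\simeq 0$, then $H\F_p$, being a retract of the $H\F_p$-module $LH\F_p$, is $L$-local; hence every $H\F_p$-local spectrum is $L$-local, in particular $S^0_p=L_{H\F_p}S^0$. Smashing then gives $S^0_p\simeq LS^0\sm S^0_p$, so $\langle LS^0\rangle\ge\langle S^0_p\rangle$; since the fibre of $S^0\to S^0_p$ is rational and hence $K(m)$-acyclic for $m\ge 1$, we get $K(m)\sm S^0_p\simeq K(m)\neq 0$ and thus $LK(m)\not\simeq 0$ for all $m\ge 1$, contradicting the vanishing of $LK(m)$ for finite $m\ge\max(n,1)$ supplied by \Cref{lem:1}. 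Hence $LH\F_p\simeq 0$, which is the missing input. (The paper is itself terse at this point, quoting \Cref{lem:1} for all $n\le i\le\infty$ although the lemma as stated covers only finite indices; your instinct that this step needs an argument is sound---it is the specific argument you propose that breaks.)
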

\begin{proof}
Suppose $LK(n) \simeq 0$ and let $F$ be a finite spectrum of type at least $n$. Replacing $F$ with $\End(F) \simeq DF \sm F$ if necessary, we may assume that $F$ and thus $LF$ are ring spectra. By the nilpotence theorem, it thus suffices to show that $K(i) \sm LF \simeq 0$ for all $0 \le i \le \infty$. Since $L$ is smashing, $K(i) \sm LF \simeq LK(i) \sm F \simeq 0$ for $n \le i \le \infty$ using the assumption and \Cref{lem:1}, while the hypothesis on $F$ guarantees that it also vanishes for $0 \le i < n$.

Conversely, let $F$ be a finite type $n$ spectrum so that $LF \simeq 0$. It follows that $F \sm LK(n) \simeq 0$. But $K(n)$ is a retract of $LK(n)$ provided $LK(n) \not\simeq 0$, so $F \sm K(n) \simeq 0$ as well, contradicting the assumption on $F$. Therefore, $LK(n) \simeq 0$. 
\end{proof}

As the next proof shows, we can use \Cref{lem:1} to detect smashing localizations. 

\begin{prop}
If $L$ is a smashing localization which is neither $0$ nor the identity functor, then there exists an $n\ge 0$ such that $\ker(L_n^f) \subseteq \ker(L) \subseteq \ker(L_n)$.
\end{prop}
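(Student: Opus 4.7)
The plan is to dichotomize smashing localizations by tracking the set
\[
S = \{m \ge 0 : LK(m) \not\simeq 0\},
\]
and then to take $n = \max S$ once both extreme cases are ruled out. By \Cref{lem:1}, $S$ is downward-closed in $\mathbb{N}$, hence equal to $\emptyset$, $\mathbb{N}$, or $\{0,1,\ldots,n\}$ for a unique $n$.

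First I would dispose of $S = \emptyset$: if $LK(m) \simeq 0$ for every $m$, then \Cref{lem:2} with $n=0$ forces every finite spectrum, in particular $S^0$, to be $L$-acyclic, so $LS^0 \simeq 0$ and hence $L \simeq 0$ by the smashing property, contradicting the hypothesis.

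The main obstacle is the case $S = \mathbb{N}$, where I need to show that $LK(m)\not\simeq 0$ for all $m$ forces $L \simeq \id$. I would bootstrap as follows. The argument in \Cref{lem:1} shows that each $K(m)$ is $L$-local and that the unit $\eta\colon S^0 \to LS^0$ is a $K(m)$-equivalence, hence an $E(n)$-equivalence, for every $n$. Consequently $L_n(\eta)$ is an equivalence, so $L_n(LS^0) \simeq L_nS^0$. Combining this with the smashing property of both $L$ and $L_n$ yields, for any spectrum $X$,
\[
L(L_nX) \simeq L_nX \wedge LS^0 \simeq X \wedge L_n(LS^0) \simeq X \wedge L_nS^0 \simeq L_nX,
\]
so every $L_nX$ is $L$-local. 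The chromatic convergence theorem then provides the $p$-local equivalence $S^0 \simeq \holim_n L_nS^0$, and since $L$-local spectra are closed under homotopy limits, $S^0$ itself is $L$-local, giving $LS^0 \simeq S^0$ and $L \simeq \id$, the desired contradiction.

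Having excluded both extremes, $S = \{0,\ldots,n\}$ for some $n \ge 0$, and it remains to verify this $n$ works. The inclusion $\ker(L) \subseteq \ker(L_n)$ is exactly the Bousfield class inequality \eqref{eq:bc} from the proof of \Cref{lem:1}, which uses only that $n \in S$. For $\ker(L_n^f) \subseteq \ker(L)$, maximality of $n$ gives $LK(n+1) \simeq 0$, so by \Cref{lem:2} every finite spectrum of type at least $n+1$ is $L$-acyclic; since $\ker(L_n^f)$ is the localizing subcategory generated by any single finite type $n+1$ spectrum, and $\ker(L)$ is itself localizing (smashing $L$ preserves arbitrary wedges), the inclusion follows.
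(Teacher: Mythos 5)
Your proposal is correct and follows essentially the same route as the paper: trichotomize via \Cref{lem:1} (your set $S$ being empty, all of $\mathbb{N}$, or $\{0,\dots,n\}$ matches the paper's three cases), use \Cref{lem:2} to kill the empty case and to get $\ker(L_n^f)\subseteq\ker(L)$, use the Bousfield class inequality \eqref{eq:bc} for $\ker(L)\subseteq\ker(L_n)$, and use chromatic convergence to force $L\simeq\id$ in the unbounded case. The only cosmetic difference is that in the case $S=\mathbb{N}$ you re-derive the fact that $L_n$-local spectra are $L$-local from the unit being a $K(m)$-equivalence, whereas the paper reads this off directly from \eqref{eq:bc}; the substance is the same.
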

\begin{proof}
By \Cref{lem:1}, any smashing localization functor $L$ belongs to one of the following three classes:
	\begin{enumerate}
		\item $LK(n) = 0$ for all $n$, or
		\item there exists an $n$ such that $LK(n) \not\simeq 0$ and $LK(m) \simeq 0$ for all $m > n$, or
		\item $LK(n) \simeq K(n)$ for all $n$.
	\end{enumerate}
In Case (1), $\ker(L)$ contains the sphere spectrum $S^0$ by \Cref{lem:2}, so $L \simeq 0$. If $L$ belongs to the second class, then \Cref{lem:2} shows that $\ker(L_n^f) \subseteq \ker(L)$, so it remains to show that $\ker(L) \subseteq \ker(L_n)$. To this end, let $X \in \ker(L)$. Because $L$ is smashing, this implies $LS^0 \sm X \simeq 0$ and thus $\bigvee_{i=0}^{n}K(i) \sm X \simeq 0$ by \eqref{eq:bc}. Therefore, $X \in \ker(L_n)$ as desired. 

Finally, if $LK(n) \simeq K(n)$ for all $n$, then \eqref{eq:bc} implies that any $L_n$-local spectrum is $L$-local, so $S^0 \simeq \lim_nL_nS^0$ is $L$-local by the chromatic convergence theorem. Therefore, $L$ must be equivalent to the identity functor, again using that $L$ is smashing.
\end{proof}

\begin{cor}
The telescope conjecture holds for all $n$ if and only if all smashing localization functors on the stable homotopy category are finite. 
\end{cor}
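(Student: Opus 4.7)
The plan is to prove both implications by leveraging the preceding proposition together with the characterization of $\ker(L_n^f)$ as the localizing ideal generated by a finite type $n+1$ spectrum, and the chain of inclusions $\ker(L_n^f) \subseteq \ker(L_n)$ that always holds (since $\langle \Tel(i) \rangle \ge \langle K(i) \rangle$).

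For the forward implication, I would assume the telescope conjecture for every $n$ and let $L$ be an arbitrary smashing localization functor. The edge cases $L\simeq 0$ and $L\simeq \id$ are finite: $\ker(0)$ is the whole stable category, generated by $S^0$, and $\ker(\id)$ is trivial. Otherwise, the preceding proposition supplies an $n\ge 0$ with $\ker(L_n^f)\subseteq \ker(L)\subseteq \ker(L_n)$. Under the telescope conjecture at height $n$, $\ker(L_n)=\ker(L_n^f)$, forcing $\ker(L)=\ker(L_n^f)$ and thus $L\simeq L_n^f$, which is finite.

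For the reverse implication, I would fix $n\ge 0$ and exploit that $L_n$ is smashing by the Hopkins--Ravenel smash product theorem, hence finite by hypothesis. So $\ker(L_n)$ is generated as a localizing subcategory by a set of finite spectra. Any such finite spectrum $F\in\ker(L_n)$ has $K(i)_*F=0$ for $0\le i\le n$, so is of type at least $n+1$ by the thick subcategory theorem. But $\ker(L_n^f)$ is (by the Mahowald--Sadofsky/Ravenel proposition recalled in the excerpt) the localizing ideal generated by any finite type $n+1$ spectrum, so every such $F$ lies in $\ker(L_n^f)$. This yields $\ker(L_n)\subseteq \ker(L_n^f)$, and the reverse containment holds automatically, so $L_n^f\simeq L_n$.

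The only genuinely delicate point is the forward direction: I need to know that the preceding proposition's dichotomy exhausts all smashing localizations (which it does, by construction), and that the hypothesis $L_n^f\simeq L_n$ is enough to collapse the sandwich $\ker(L_n^f)\subseteq \ker(L)\subseteq \ker(L_n)$ — a purely formal step once the telescope conjecture is in hand. The reverse direction is essentially a bookkeeping exercise in the thick subcategory theorem; no further technical input beyond what has already been set up is needed.
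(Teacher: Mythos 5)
Your proof is correct and, since the paper leaves the proof of the corollary implicit, it fills in precisely the argument that the surrounding results are set up to deliver. The forward direction is the intended one: the proposition sandwiches any nontrivial smashing $L$ between $\ker(L_n^f)$ and $\ker(L_n)$, and the telescope conjecture collapses the sandwich; the trivial cases $L\simeq 0$ and $L\simeq\id$ are handled correctly. The reverse direction is also right: $L_n$ is smashing by Hopkins--Ravenel, hence finite by hypothesis, its generating finite spectra are $K(i)$-acyclic for $i\le n$ and hence of type $\ge n+1$, and the thick subcategory theorem places them in $\ker(L_n^f)$, giving $\ker(L_n)\subseteq\ker(L_n^f)$; the opposite inclusion always holds. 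One could alternatively run the reverse direction through the classification of finite localizations (any finite $L\ne 0,\id$ equals some $L_m^f$, then pin down $m=n$ by testing against $K(i)$), but your direct containment argument is cleaner and requires no extra bookkeeping.
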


This latter formulation, originally due to Bousfield \cite[Conj.~3.4]{bousfield_locspectra}, generalizes well to other compactly generated triangulated categories where it has been studied extensively, see for example \cite{keller_smashing,krause_telescopeconj}.

\section{The chromatic splitting conjecture}

The chromatic splitting conjecture describes how the localizations $L_nS^0$ for varying $n$ assemble into $S^0$ via the chromatic tower \eqref{eq:chromatictower}, working $p$-locally as before. Informally speaking, it asserts that this gluing process is as simple as it can be without being trivial, but there are various refinements of its statement. We will focus on the weakest form here and refer the interested reader to \cite{hovey_csc} for further details. 

For each $n\ge 1$ there is a map of fiber sequences, where the right square---known as the chromatic fracture square---is a homotopy pullback:
\begin{equation}\label{eq:chromaticsquare}
\xymatrix{F(L_{n-1}S^0,L_nX) \ar[r] \ar[d]_{\simeq} & L_nX \ar[r] \ar[d] & L_{K(n)}X \ar[d] \ar@{-->}[ld]_-{\alpha_n} \\ 
F(L_{n-1}S^0,L_nX) \ar[r]_-{\beta_n} & L_{n-1}X \ar[r] & L_{n-1}L_{K(n)}X. \ar@<0.5ex>@{-->}[l]^-{\gamma_n}}  
\end{equation}

Consider the question whether there exists a map $\alpha_n$ as indicated making the top triangle in the chromatic fracture square commute. By chasing the diagram, such a map exists if and only if $\beta_n$ is nullhomotopic, which in turn is equivalent to the existence of a map $\gamma_n$ splitting the map $L_{n-1}X \to L_{n-1}L_{K(n)}X$. Based on explicit computations of the cohomology of Morava stabilizer groups as well as of $\pi_*L_{K(n)}S^0$ for small $n$, Hopkins (see~\cite{hovey_csc}) arrived at the following: 

\begin{conj}[Chromatic splitting conjecture]\label{conj:csc}
If $X$ is the $p$-completion of a finite spectrum, then a splitting $\gamma_n$ exists for all $n$.
\end{conj}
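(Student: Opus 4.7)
Because the conjecture is open, I will sketch a strategy that has informed existing partial results rather than a complete argument. A first reduction is to the case $X = \pc{S^0}$: since $L_n$ is smashing by the smash product theorem, the chromatic fracture square for arbitrary $X$ is obtained from that of $S^0$ by smashing, so a splitting $\gamma_n$ for the sphere yields compatible splittings for all $p$-complete finite $X$. Equivalently, by chasing \eqref{eq:chromaticsquare}, it is enough to produce a map $\alpha_n\colon L_{K(n)}\pc{S^0}\to L_n\pc{S^0}$ retracting $L_n\pc{S^0}\to L_{K(n)}\pc{S^0}$ after applying $L_{n-1}$.

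The next step is to analyze $L_{n-1}L_{K(n)}\pc{S^0}$ via the Devinatz--Hopkins description $L_{K(n)}\pc{S^0} \simeq E_n^{h\mathbb{G}_n}$, where $E_n$ is Morava $E$-theory and $\mathbb{G}_n$ the extended Morava stabilizer group. The homotopy fixed point spectral sequence with $E_2$-page $H^*_c(\mathbb{G}_n;\pi_*E_n)$ would drive the computation of $\pi_*L_{K(n)}\pc{S^0}$. The conceptual task is then to identify the predicted summands inside $L_{n-1}L_{K(n)}\pc{S^0}$: the inclusion of $L_{n-1}\pc{S^0}$ is tautological (arising from the unit of the $L_{K(n)}$-localization), while the conjectural extra summand (essentially a shift of $L_{n-1}\pc{S^0}$, modeled on Hopkins' height $1$ picture) should be detected by a class coming from the center of $\mathbb{G}_n$, where $\Z_p \subset \mathbb{G}_n$ produces a polynomial generator in continuous group cohomology. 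Concretely, I would build a map $L_{n-1}\Sigma^{-1}\pc{S^0} \to L_{n-1}L_{K(n)}\pc{S^0}$ realizing this extra summand, assemble it with the tautological inclusion, and verify that the resulting map is a retraction of $L_{n-1}\pc{S^0}\to L_{n-1}L_{K(n)}\pc{S^0}$.

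The principal obstacle is controlling $\pi_*L_{K(n)}\pc{S^0}$ and its $L_{n-1}$-localization well enough to exhibit the requisite splitting classes. The continuous cohomology $H^*_c(\mathbb{G}_n;\pi_*E_n)$ is tractable only at small $n$ and $p$: the height $1$ case follows from the simple structure of $\Z_p^\times$, and at height $2$ explicit computations (by Shimomura--Yabe, Goerss--Henn--Mahowald--Rezk, Beaudry, and others) have verified various refinements for primes $p\ge 3$, but for $n \ge 3$ even a complete rational computation is out of reach. A secondary difficulty, independent of these computations, is to rule out the differentials and multiplicative extensions in the fracture spectral sequence that could obstruct an otherwise algebraic splitting; this is where the naive inductive approach breaks down, since the extra summand must be constructed coherently in homotopy rather than only on $E_2$. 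Once the splitting is produced at the sphere level and at a given height, propagating it to all $p$-complete finite $X$ by smashing is comparatively formal, provided the chosen $\gamma_n$ is sufficiently natural.
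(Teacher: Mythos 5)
This statement is a \emph{conjecture}: the paper offers no proof of it, and indeed records that it is known only for $n\le 2$ (via Adams--Baird/Ravenel at height $1$ and Shimomura--Yabe, Goerss--Henn--Mahowald--Rezk, Beaudry--Goerss--Henn at height $2$) and open for all $n\ge 3$. So there is nothing in the paper to compare your argument against, and your text, which you yourself present as a strategy sketch rather than an argument, cannot be accepted as a proof of the statement. What you describe is essentially the known program: reduce to $X=\pc{S^0}$, compute $\pi_*L_{K(n)}\pc{S^0}$ through the descent/homotopy fixed point spectral sequence $H^*_c(\mathbb{G}_n;\pi_*E_n)$ for $L_{K(n)}\pc{S^0}\simeq E_n^{h\mathbb{G}_n}$, and then exhibit $L_{n-1}\pc{S^0}$ as a retract of $L_{n-1}L_{K(n)}\pc{S^0}$. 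That is an accurate description of how the low-height cases were settled, but the decisive input --- control of $H^*_c(\mathbb{G}_n;\pi_*E_n)$ and of the ensuing spectral sequences for $n\ge 3$ --- is exactly what is missing, as you acknowledge; so the proposal contains no new idea that closes the gap.

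Two smaller points to correct in the write-up. First, the reduction to the sphere should not be justified by ``$L_n$ is smashing'': the fracture square also involves $L_{K(n)}$, which is \emph{not} smashing. The correct reason is that a finite spectrum $X$ is dualizable, so smashing with $X$ commutes with every Bousfield localization, whence the square \eqref{eq:chromaticsquare} for $X$ (and for $\pc{X}$) is obtained from the one for the sphere by smashing with $X$, and a natural $\gamma_n$ for the sphere induces one for $X$. Second, the weak form stated here only asks for a section $\gamma_n$ of $L_{n-1}X\to L_{n-1}L_{K(n)}X$; your plan of writing the complement as a single shifted copy $L_{n-1}\Sigma^{-1}\pc{S^0}$ is both stronger than needed and, at heights $\ge 2$, not even the right shape --- the paper's height-$2$ theorem shows the complement involves terms such as $L_1 S_p^{-1}$ together with rational pieces $L_0(S_p^{-3}\vee S_p^{-4})$ (and additional mod-$p$ summands at $p=2$), which is the content of the stronger form of the conjecture, not of \cref{conj:csc}.
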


The finiteness assumption on $X$ is essential in this conjecture: Indeed, Devinatz \cite{devinatz_counterexample} proves that, for $X = BP_p$ the $p$-completion of the Brown--Peterson spectrum, the map $L_{n-1}BP_p \to L_{n-1}L_{K(n)}BP_p$ splits if and only if $n = 1$. If the chromatic splitting conjecture holds for a finite spectrum $X$, then we obtain the following consequences:
\begin{enumerate}
	\item The canonical map $X_p \to \prod_n L_{K(n)}X_p$ is the inclusion of a summand, as proven in \cite{hovey_csc}.
	\item Taking the limit over the compositions $L_{K(n+1)}X \xrightarrow{\alpha_{n+1}} L_nX \to L_{K(n)}X$ gives an equivalence $X \to \lim_nL_{K(n)}X$. This follows follows from the chromatic convergence theorem by cofinality.
\end{enumerate}
In other words, the chromatic splitting conjecture implies that a finite spectrum $X$ can be recovered from its monochromatic pieces $L_{K(n)}X$. 

We now review what is known about the chromatic splitting conjecture for $S_p^0$, the $p$-complete sphere spectrum. Take $n=1$ and $p>2$, then a classical computation with complex $K$-theory, originally due to Adams and Baird~\cite{adamsbaird} and then revisited by Ravenel~\cite{ravenel_localization}, shows that 
\[
\pi_iL_{K(1)}S_p^0 \cong
	\begin{cases}
		\Z_p & \text{for } i \in \{-1,0\}, \\
		\Z/p^{s+1} & \text{for } i = 2(p-1)p^sm-1 \text{ with } p \nmid m, \\
		0 & \text{otherwise}.
	\end{cases}
\]
Thus $\pi_*L_0L_{K(1)}S^0_p \cong \Q_p$ for $i = 0$ and $i= -1$ and is $0$ otherwise; of course, $\pi_*L_0S_p^0$ is isomorphic to $\Q_p$ in degree $0$. One can then see that $L_0L_{K(1)}S_p^0$ splits as $L_0S_p^0 \vee L_0S_p^{-1}$. Replacing complex $K$-theory by real $K$-theory yields the same conclusion for $n=1$ and $p=2$. The analogous computations at height $n=2$ are considerably more complex and are the subject of extensive work by Shimomura--Yabe \cite{sy_25} ($p \ge 5$), Goerss--Henn--Mahowald--Rezk \cite{ghmr_23,ghm_csc23} ($p=3$), and Beaudry--Goerss--Henn \cite{bgh_csc22} ($p=2$). Their results can be summarized as follows:

\begin{thm}[Beaudry--Goerss--Henn--Mahwald--Rezk--Shimomura--Yabe]
The chromatic splitting conjecture holds for $n=2$ and all $p$. If $p\ge 3$, then 
\[
L_1L_{K(2)}S_p^0 \simeq L_1(S_p^0 \vee S_p^{-1}) \vee L_0(S_p^{-3} \vee S_p^{-4}),
\]
while for $p=2$, we have
\[
L_1L_{K(2)}S_p^0 \simeq L_1(S_p^0 \vee S_p^{-1} \vee S_p^{-2}/p \vee S_p^{-3}/p) \vee L_0(S_p^{-3} \vee S_p^{-4}).
\]
\end{thm}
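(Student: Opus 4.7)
The strategy is to apply the $L_1$-chromatic fracture square to $X = L_{K(2)} S_p^0$, giving
\[
L_1 L_{K(2)} S_p^0 \simeq L_{K(1)} L_{K(2)} S_p^0 \times_{L_0 L_{K(1)} L_{K(2)} S_p^0} L_0 L_{K(2)} S_p^0,
\]
and then to identify each corner using the theory of the Morava stabilizer group $\bG_2$. The two wedge summands in the theorem correspond respectively to the $L_1$-local shadow of the low-degree continuous cohomology of $\bG_2$ (in degrees $0, -1$) and to a purely rational contribution from the top-degree classes (in degrees $-3, -4$) that have no $K(1)$-local counterpart.

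First I would compute $L_0 L_{K(2)} S_p^0$. By Devinatz--Hopkins we have $L_{K(2)} S_p^0 \simeq E_2^{h\bG_2}$, so the descent spectral sequence identifies its rationalization with $H^*_c(\bG_2; (E_2)_*) \otimes \Q$. A Lazard-type argument identifies this with the Lie algebra cohomology of the Morava stabilizer Lie algebra, which has $\Q_p$-dimension $n^2 = 4$; an exterior-algebra computation yields nonzero $\Q_p$-contributions only in the four degrees $0, -1, -3, -4$, with the latter pair twisted by the determinant character (the same twist that appears in Gross--Hopkins duality). The classes in degrees $0, -1$ eventually feed into the $L_1(S_p^0 \vee S_p^{-1})$ summand, while the classes in degrees $-3, -4$ are concentrated in $L_0$-land and produce the summand $L_0(S_p^{-3} \vee S_p^{-4})$.

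Next I would compute $L_{K(1)} L_{K(2)} S_p^0$ via a topological resolution of $L_{K(2)} S_p^0$ by spectra of the form $E_2^{hF}$ for finite subgroups $F \le \bG_2$: for $p \ge 5$ this is essentially Shimomura--Yabe's analysis; for $p = 3$ one uses the duality resolution of Goerss--Henn--Mahowald--Rezk; and for $p = 2$ the refinement due to Beaudry--Goerss--Henn. The key input is that $L_{K(1)} E_2^{hF}$ is a wedge of shifted $K(1)$-local spheres whenever $|F|$ is prime to $p$, while the presence of $p$-torsion in some $F$ (which occurs precisely at $p = 2$ because $\bG_2$ contains a binary tetrahedral subgroup of order $24$) forces Moore-type summands $S_p/p$ to appear, and this accounts for the extra $L_1(S_p^{-2}/p \vee S_p^{-3}/p)$ in the $p=2$ answer.

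Finally, I would assemble the fracture square. The pullback splits as claimed provided one can exhibit explicit maps $L_0 S_p^{-3} \vee L_0 S_p^{-4} \to L_1 L_{K(2)} S_p^0$ realizing the top-dimensional rational $\bG_2$-cohomology classes (which is where Gross--Hopkins duality enters) together with a section $L_1(S_p^0 \vee S_p^{-1}) \to L_1 L_{K(2)} S_p^0$ built from the unit map and a $K(1)$-local class in $\pi_{-1} L_{K(2)} S_p^0$ detected by the determinant. The main obstacle is the case $p=2$: the cohomology $H^*_c(\bG_2; (E_2)_*)$ at $p=2$ is considerably more intricate because of the abundant $2$-torsion in $\bG_2$, forcing Moore-type summands to appear and threatening hidden $2$-extensions between the $K(1)$-local and rational fracture pieces. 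Pushing Beaudry--Goerss--Henn's duality resolution to the precision required to exclude such extensions is the real technical heart of the proof.
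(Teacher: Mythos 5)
This theorem is one of the survey statements in the paper: the text explicitly attributes it to Shimomura--Yabe ($p\ge 5$), Goerss--Henn--Mahowald--Rezk ($p=3$), and Beaudry--Goerss--Henn ($p=2$), and offers no proof of its own. So there is no internal argument to compare your sketch against; I will instead evaluate the sketch on its merits.

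Your overall architecture---the $L_1$ fracture square applied to $L_{K(2)}S_p^0$, the rational corner computed via the Lazard isomorphism identifying $H_c^*(\mathbb{G}_2;(E_2)_*)\otimes\Q$ with Lie algebra cohomology of a $4$-dimensional $\Q_p$-Lie algebra (yielding the exterior algebra on classes in cohomological degrees $1$ and $3$, hence contributions in $\pi_0,\pi_{-1},\pi_{-3},\pi_{-4}$), and the $K(1)$-local corner attacked by $K(1)$-localizing a topological resolution by $E_2^{hF}$---is a faithful description of the strategy used in the $p=3$ and $p=2$ papers, and the role you assign to Gross--Hopkins duality and the determinant twist is accurate. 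Two specific claims are wrong, however. First, Shimomura--Yabe did not work with finite resolutions by homotopy fixed point spectra; their height-$2$, $p\ge 5$ analysis is a direct $BP$-based chromatic spectral sequence computation, from which the splitting is extracted (as in Hovey's original article on the conjecture), so "essentially Shimomura--Yabe's analysis" cannot be subsumed under the resolution framework. Second, and more substantively, your explanation for the extra Moore summands at $p=2$ is false: $p$-torsion in finite subgroups of $\mathbb{G}_2$ does \emph{not} occur "precisely at $p=2$." At $p=3$ the maximal finite subgroup $G_{24}$ contains a $C_3$, so $3$-torsion is present, yet the $p=3$ answer has no Moore summands. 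The binary tetrahedral group has order $24=2^3\cdot 3$, so it exhibits $p$-torsion at both $p=2$ and $p=3$. The actual source of the $S_p^{-2}/p\vee S_p^{-3}/p$ summands at $p=2$ is a genuinely finer feature of Beaudry--Goerss--Henn's computation of $\pi_*L_{K(1)}L_{K(2)}S_2^0$, not a formal consequence of torsion in finite subgroups; your heuristic, applied uniformly, would incorrectly predict Moore-spectrum summands at $p=3$ as well.
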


There is a stronger version of \Cref{conj:csc} which additionally describes how the fiber term $F(L_{n-1}S^0,L_nX)$ in \eqref{eq:chromaticsquare} decomposes into spectra of the form $L_iX$ with $0 \le i \le n-1$. If correct, it would imply (see~\cite{acsc}) that the stable homotopy groups of $L_{K(n)}S^0$ are finitely generated over $\Z_p$ for $n\ge 1$, another major open problem in chromatic homotopy theory, see \cite{devinatz_finiteness} for partial results. However, this conjecture is open for all heights $n \ge 3$ and primes $p$; there are hints \cite{ultra1,pstragowski_alg} that the problem might at least be approachable for large primes with respect to the height $n$. 

We end this section with the following result by Minami \cite{minami_chromatictower}, which provides some evidence for the chromatic splitting conjecture at general heights. He introduces a class of so-called robust spectra including finite spectra as well as $BP$ and proves:

\begin{thm}[Minami]
Fix a height $n$ and prime $p$. If $X$ is a robust spectrum and $m$ and $k$ are positive integers satisfying $m-k \ge n+ s_0 +1$ where $s_0$ is the vanishing line intercept of the $E(n)$-based Adams--Novikov spectral sequence for $S^0$, then the map $L_mX \to L_nX$ factors through $L_{K(k+1) \vee \ldots \vee K(m)}X$. 
\end{thm}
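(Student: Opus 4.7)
My plan is to use a chromatic fracture square to convert the factorization problem into a nullity statement in the Adams--Novikov spectral sequence, and then invoke the vanishing line hypothesis to conclude. Using the Bousfield equivalence $\langle E(m)\rangle = \langle E(k) \vee K(k+1) \vee \cdots \vee K(m)\rangle$ together with the orthogonality $E(k) \sm K(i) \simeq 0$ for $i > k$, one obtains a homotopy pullback
\[
\xymatrix{L_m X \ar[r] \ar[d] & L_{K(k+1) \vee \cdots \vee K(m)} X \ar[d] \\ L_k X \ar[r] & L_k L_{K(k+1) \vee \cdots \vee K(m)} X.}
\]
Let $F$ denote the fibre of the top horizontal map; by the pullback property, $F$ is equivalent to the fibre of the bottom horizontal map and in particular sits inside $L_k X$. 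The desired factorization of $L_m X \to L_n X$ through $L_{K(k+1) \vee \cdots \vee K(m)} X$ then exists if and only if the composite $F \to L_m X \to L_n X$ is nullhomotopic.

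The second step is to estimate this obstruction in the $E(n)$-based Adams--Novikov spectral sequence. Since $F$ arises from $L_k X$, the chromatic gap between $L_m$ and $L_k$ should promote the obstruction class into high Adams--Novikov filtration. More precisely, I would argue that each of the $m - k$ successive fibre sequences $M_i X \to L_i X \to L_{i-1} X$ (for $k < i \le m$) contributes a unit of Adams--Novikov filtration, through the standard chromatic-to-Adams--Novikov filtration comparison. Accounting for an extra ``$n+1$'' to cover the range of stems relevant to mapping into $L_n X$, the obstruction class ends up with filtration strictly above the vanishing line of intercept $s_0$ whenever $m - k \ge n + s_0 + 1$, and hence must vanish.

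The remaining input is the promotion of this vanishing-line estimate from $S^0$ to $X$; this is exactly the role of the robustness hypothesis, which by construction guarantees that the $E(n)$-based Adams--Novikov spectral sequence for $X$ inherits the same vanishing-line behaviour from that of $S^0$ (the class being engineered to include finite spectra and $BP$). I expect the principal obstacle to be the precise filtration bookkeeping in the second step: carefully reconciling the chromatic filtration with the Adams--Novikov filtration, especially after the potentially distorting effect of $L_n$-localization and the various completion phenomena it introduces, is delicate and requires a careful comparison between the ANSS for $L_n X$ and the ANSS for $X$ itself.
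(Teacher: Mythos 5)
The paper itself gives no argument for this theorem---it is quoted from Minami's work---so I am judging your outline on its own terms. The first step (the fracture square for $\langle E(m)\rangle=\langle E(k)\vee K(k+1)\vee\cdots\vee K(m)\rangle$, identifying the fibre $F\simeq F(L_kS^0,L_mX)$ of the top map with that of the bottom map, and reducing the factorization to the vanishing of the composite $F\to L_mX\to L_nX$) is sound and does reflect the general shape of the known argument. But everything after that is a plan rather than a proof, and the gap sits exactly where the theorem lives. The assertion that ``each of the $m-k$ successive fibre sequences $M_iX\to L_iX\to L_{i-1}X$ contributes a unit of Adams--Novikov filtration,'' plus an unexplained ``extra $n+1$,'' is precisely the filtration-shift estimate that has to be proved; you give no mechanism for it (one needs to exhibit the obstruction as a composite of connecting maps in the tower and show each such map raises $E(n)$-based Adams--Novikov filtration, and then derive the numerology $m-k\ge n+s_0+1$ rather than arrive at it by fiat). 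You also need a filtration/vanishing-line statement for the group $[F,L_nX]$ where $F=F(L_kS^0,L_mX)$ is neither finite nor robust, so one cannot simply quote a vanishing line ``for $X$''; handling maps out of this auxiliary spectrum (convergence, possible infinite filtration) is a genuine technical issue your sketch does not address.

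The second structural problem is the use of robustness. You invoke it as ``by construction guarantees that the $E(n)$-based ANSS for $X$ inherits the same vanishing-line behaviour,'' but this is circular: without stating the actual defining conditions of a robust spectrum and showing how they feed into the filtration estimate, the hypothesis does no work in your argument, and the claim that finite spectra and $BP$ satisfy it is likewise unexamined. Since you yourself flag the filtration bookkeeping as the principal obstacle, what you have is a reasonable strategy statement with the central lemma missing, not a proof.
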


\section{An algebraic analogue}

We conclude this survey by discussing an algebraic analogue of the stable homotopy category in which algebraic versions of the generating hypothesis, the telescope conjecture, as well as the chromatic splitting conjecture have been settled. This is just one instance of the observation that the chromatic programme and consequently the above chromatic conjectures can be formulated in many other contexts, thereby providing a plethora of test cases as well as motivation for a fruitful transfer of techniques. Other examples include derived categories of quasi-coherent sheaves on schemes or stacks, stable equivariant homotopy categories, motivic categories, or categories arising in non-commutative geometry, see \cite{balmer_icm} for an overview. 

Let $G$ be a finite group, let $k$ be a field of characteristic $p$, and write $kG$ for the associated group algebra. Recall that the stable module category $\StMod_{kG}$ is the quotient of $\Mod_{kG}$ by the projectives and that it comes equipped with the structure of a symmetric monoidal triangulated category with tensor unit $k$. As in~\cite{bik_finitegroups} we write $\Proj(H^*(G;k))$ for the projective variety of the Noetherian graded commutative ring $H^*(G;k)$; the underlying set of $\Proj(H^*(G;k))$ consists of the homogeneous prime ideals in $H^*(G;k)$ different from the ideal of all positive degree elements.

The finite localization functors on $\StMod_{kG}$ have been classified in the work of Benson, Carlson, and Rickard~\cite{bcr_thick}. As a result of a series of papers culminating in \cite{bik_finitegroups}, Benson, Iyengar, and Krause generalized this to a complete classification of all localization functors: They develop a theory of support and employ it to establish a bijection between the set of localizing tensor ideals of $\StMod_{kG}$ and arbitrary subsets of $\Proj(H^*(G;k))$. Their theory yields in particular a proof of the telescope conjecture in this context.

\begin{thm}[Benson--Iyengar--Krause]
The generalized telescope conjecture holds in $\StMod_{kG}$, i.e., the category of acyclics of any smashing localization functor is generated by compact objects. Furthermore, the smashing localization functors on $\StMod_{kG}$ are in bijection with specialization closed\footnote{A subset $\cV$ is called specialization closed if $\fp \in \cV$ and $\fp \subseteq \fq$ imply $\fq \in \cV$.} subsets of $\Proj(H^*(G;k))$. 
\end{thm}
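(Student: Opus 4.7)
The plan is to develop a theory of support on $\StMod_{kG}$ indexed by $\Proj(H^*(G;k))$, prove a stratification theorem classifying all localizing tensor ideals, and then read off both claims simultaneously.

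First, for each $\fp \in \Proj(H^*(G;k))$ I would construct idempotent tensor functors $\Gamma_\fp$ and $L_\fp$ on $\StMod_{kG}$ using the central action of $H^*(G;k) \cong \End^*_{\StMod_{kG}}(k)$. Explicitly, for a homogeneous element $x$ form the Koszul object $k\mm x$ as the cofibre of $x\colon \Sigma^{|x|}k \to k$; assemble these into a local cohomology functor at a homogeneous ideal via an inverse limit of Koszul-type constructions; and combine localization at $H^*(G;k)\setminus \fp$ with local cohomology at $\fp$. Define $\supp(M) = \{\fp \in \Proj(H^*(G;k)) : \Gamma_\fp M \neq 0\}$ and verify its formal properties: exactness, compatibility with $\otimes$ and coproducts, and the \emph{local-to-global principle} that the localizing tensor ideal generated by $M$ agrees with that generated by $\{\Gamma_\fp M\}_\fp$.

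Next I would prove the stratification theorem: $\cC \mapsto \bigcup_{M \in \cC}\supp(M)$ is a bijection between localizing tensor ideals of $\StMod_{kG}$ and arbitrary subsets of $\Proj(H^*(G;k))$, with inverse $\cV \mapsto \{M : \supp(M) \subseteq \cV\}$. Surjectivity is delivered by the local-to-global principle. Injectivity reduces to the \emph{minimality} assertion that any two nonzero objects of the local subcategory $\Gamma_\fp\StMod_{kG}$ generate the same localizing ideal. Following Benson--Iyengar--Krause, I would reduce minimality to the case of elementary abelian $p$-groups via Chouinard's theorem (projectivity is detected on elementary abelians) together with Quillen's stratification theorem (which covers $\Proj(H^*(G;k))$ by images of $\Proj(H^*(E;k))$ for elementary abelian $E$), and then conclude using that $H^*(E;k)$ is polynomial up to nilpotents together with rank-variety calculations.

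Finally I would combine this with the Benson--Carlson--Rickard classification of thick tensor ideals of $\stmod_{kG}$, which are in bijection with specialization closed subsets $\cV \subseteq \Proj(H^*(G;k))$ via a thick ideal $\cC_\cV$. For each such $\cV$, the finite localization killing $\cC_\cV$ is automatically smashing, its acyclic subcategory is $\Loc(\cC_\cV)$, and the local-to-global principle yields $\supp(\Loc(\cC_\cV)) = \cV$; by stratification this coincides with the localizing ideal attached to $\cV$. Conversely, if $L$ is smashing with $\ker(L)$ corresponding to $\cV$, then a Neeman-style argument in compactly generated tensor triangulated categories, combined with stratification, shows $\ker(L)$ must be generated by its compact objects, so by Benson--Carlson--Rickard $\cV$ is specialization closed. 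This simultaneously gives the bijection with specialization closed subsets and the generalized telescope conjecture.

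The principal obstacle is the minimality step in the stratification theorem: everything else is formal manipulation with idempotent functors, an invocation of Benson--Carlson--Rickard, or standard tensor triangular generalities. Minimality requires genuine modular representation-theoretic input --- Chouinard's theorem, Quillen stratification, and rank-variety technology --- in order to reduce to a tractable calculation on elementary abelian $p$-groups.
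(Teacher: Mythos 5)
The paper itself does not prove this theorem --- it is quoted with attribution to Benson--Iyengar--Krause \cite{bik_finitegroups} --- and your outline is essentially a faithful reconstruction of their actual argument: local cohomology/support functors $\Gamma_{\fp}$ built from the $H^*(G;k)$-action via Koszul objects, stratification as local-to-global principle plus minimality, reduction of minimality to elementary abelian $p$-groups via Chouinard and Quillen stratification, and then the telescope conjecture and the bijection with specialization closed subsets deduced from stratification together with the Benson--Carlson--Rickard classification of thick tensor ideals. Two small points of precision: for $G$ not a $p$-group the unit $k$ does not generate $\StMod_{kG}$, so the statement (and BIK's theorem) is really about smashing localizations whose acyclics form a \emph{tensor ideal}, a hypothesis your final step should make explicit; and in BIK the elementary abelian case is settled by passing, Koszul-duality style, to DG modules over a graded polynomial ring rather than literally by rank-variety calculations, though rank varieties (Avrunin--Scott) do enter the subgroup reduction --- these are matters of implementation, not gaps in the strategy.
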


In fact, they establish an analogous classification for the larger category $\Stable_{kG}$ of unbounded complexes of injective $kG$-modules up to homotopy, which fits into a recollement between $\StMod_{kG}$ and the derived category of $kG$-modules~\cite{krausebenson_kg}. In this case, the role of the parametrizing variety is played by $\Spec^h(H^*(G;k))$, the Zariski spectrum of all homogeneous prime ideals of $H^*(G;k)$. In particular, any specialization closed subset $\cV \subseteq \Spec^h(H^*(G;k))$ gives rise to a localization functor $L_{\cV}$ on $\Stable_{kG}$. For example, if $\fp$ is a homogeneous prime ideal, then $\cV(\fp) = \{\fq\mid \fp \subseteq \fq \} \subseteq \Spec^h(H^*(G;k))$ is specialization closed, and thus provides a localization functor $L_{\cV(\fp)}$ and a completion functor $\Lambda^{\fp}$. These functors should be thought of as algebraic analogues of the functor $L_{n-1}$ and $L_{K(n)}$. 

Before we can state the analogue of the chromatic splitting conjecture in this context, we need to introduce some terminology: To emphasize the analogy to stable homotopy category, we write $\pi_*M$ for the graded abelian group of homotopy classes of maps from $k$ to $M$ in $\Stable_{kG}$. Call two prime ideals $\fp,\fp' \in \Spec^h(H^*(G;k))$ adjacent if $\fp' \subsetneq \fp$ and this chain does not refine, i.e., there does not exist $\fq \in \Spec^h(H^*(G;k))$ such that $\fp' \subsetneq \fq \subsetneq \fp$. Furthermore, a module $M \in \Stable_{kG}$ is said to be $\fp$-local if $\pi_*M$ is a $\fp$-local $H^*(G;k)$-module, and a compact $M$ is said to be of type $\fp'$ if $\pi_*M$ is $\fp'$-torsion as a graded $H^*(G;k)$-module. 

\begin{thm}[\cite{bhv3}]
Suppose $G$ is a finite $p$-group. Let $\fp,\fp' \in \Spec^h(H^*(G;k))$ be adjacent prime ideals and let $M \in \Stable_{kG}$ be $\fp$-local. There is a homotopy pullback square
\[
\xymatrix{M \ar[r] \ar[d] & \Lambda^{\fp}M \ar[d] \\
L_{\cV(\fp)}M \ar[r] & L_{\cV(\fp)}\Lambda^{\fp}M.}
\]
If $M$ is compact and of type $\fp'$, then the bottom map in this square is split. 
\end{thm}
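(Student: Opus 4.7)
The plan has two parts matching the two conclusions: first, establish the pullback square formally using the machinery of Benson--Iyengar--Krause, and second, construct the splitting $\gamma$ for $M$ compact of type $\fp'$, which is the real content.

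For the pullback square, I would work with the localization functor $L_{\cV(\fp)}$ and its associated local cohomology functor $\Gamma_{\cV(\fp)}$, related by the fiber sequence $\Gamma_{\cV(\fp)}M \to M \to L_{\cV(\fp)}M$. Applying the same fiber sequence to $\Lambda^{\fp}M$ produces the two columns of the desired square, and a standard diagram chase reduces the pullback claim to showing that the natural map $\Gamma_{\cV(\fp)}M \to \Gamma_{\cV(\fp)}\Lambda^{\fp}M$ is an equivalence. The $\fp$-locality of $M$ combined with the adjacency of $\fp', \fp$ restricts the support of $\Gamma_{\cV(\fp)}M$ to the single point $\{\fp\}$, since any other $\fq \in \cV(\fp)$ lies outside the support of an $\fp$-local module. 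The desired equivalence is then the Greenlees--May type relation $\Gamma_{\{\fp\}}\Lambda^{\fp} \simeq \Gamma_{\{\fp\}}$, which is a standard feature of the BIK machinery with classical antecedents in commutative algebra. This is the algebraic incarnation of the chromatic fracture square \eqref{eq:chromaticsquare}.

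For the splitting, the hypotheses that $M$ is compact of type $\fp'$ and $\fp$-local imply that $\pi_*M$ is a finitely generated $H^*(G;k)$-module supported on the two-element chain $\{\fp' \subsetneq \fp\}$, the algebraic analogue of a $K(n)$-local finite spectrum of type $n$. I would construct $\gamma$ via an algebraic Gross--Hopkins-style duality: for such $M$, the completion $\Lambda^{\fp}M$ is Matlis-dual (up to a shift and a twist by an element of the algebraic Picard group) to $M$ itself, and this pairing localizes along $L_{\cV(\fp)}$ to yield a natural retraction of $L_{\cV(\fp)}M \to L_{\cV(\fp)}\Lambda^{\fp}M$. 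Compactness of $M$ controls the relevant Ext groups so that the dualizing object behaves well, and the finite $p$-group hypothesis on $G$ ensures $H^*(G;k)$ is Noetherian graded-commutative, guaranteeing convergence of the spectral sequences computing $\pi_*L_{\cV(\fp)}\Lambda^{\fp}M$.

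The main obstacle is this last step: even granted the formal pullback square, producing the splitting requires exhibiting $L_{\cV(\fp)}M$ as a genuine wedge summand of $L_{\cV(\fp)}\Lambda^{\fp}M$, not just a factor in an extension. The Gross--Hopkins-style retraction is well-motivated by the chromatic analogue, but verifying that the Matlis duality pairing descends correctly along $L_{\cV(\fp)}$ and that the Picard-group twist does not obstruct the existence of a section is delicate. Part (i), by contrast, is essentially formal once the BIK framework and its Greenlees--May relations are in hand.
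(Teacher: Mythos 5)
The paper does not actually supply a proof of this theorem: it is stated as a quotation from \cite{bhv3}, and the survey moves on immediately to the generating hypothesis. So there is no in-paper proof to compare against; I can only assess your argument against what the cited reference does.

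Your first part---establishing the pullback square by factoring both columns through $\Gamma_{\cV(\fp)}$, then reducing to the Greenlees--May/MGM-type equivalence $\Gamma_{\cV(\fp)}\Lambda^{\fp}\simeq\Gamma_{\cV(\fp)}$ on $\fp$-local objects---is correct and is the standard argument in the Benson--Iyengar--Krause framework. That matches how the fracture square is obtained in the source.

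The second part is where I think the proposal goes off course. You propose that, for $M$ compact of type $\fp'$ and $\fp$-local, the completion $\Lambda^{\fp}M$ is Matlis-dual to $M$ up to a shift and a Picard twist, and that this ``Gross--Hopkins-style'' pairing descends along $L_{\cV(\fp)}$ to produce a section. Two concerns. First, the duality statement itself is not right as formulated: Matlis/local duality in a Noetherian setting exchanges $\Gamma_{\fp}$-torsion and $\Lambda^{\fp}$-complete objects via the injective hull of the residue field, so the natural statement relates $\Gamma_{\fp}DM$ to $\Lambda^{\fp}M$ (or variants thereof), not $\Lambda^{\fp}M$ directly to $M$; and $M$ here is $\fp'$-torsion, not $\fp$-torsion, so it does not sit on the ``torsion'' side of a duality centered at $\fp$. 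Second, even granting an appropriate Gorenstein-type duality, you have not indicated why the induced pairing should produce a \emph{retraction} of the specific map $L_{\cV(\fp)}M\to L_{\cV(\fp)}\Lambda^{\fp}M$, as opposed to some other map; you acknowledge this yourself as the ``delicate'' step, which is a flag that the argument is incomplete rather than merely unwritten. The actual argument in \cite{bhv3} is more concrete: it analyzes the homotopy of $L_{\cV(\fp)}\Lambda^{\fp}M$ directly. After applying $L_{\cV(\fp)}$, everything in sight is $\fp'$-local and supported at the single prime $\fp'$ (this is where adjacency and the type-$\fp'$ hypothesis enter), and compactness forces $\pi_*M$ to be finitely generated; over the residue field $k(\fp')$ the inclusion $\pi_*L_{\cV(\fp)}M\hookrightarrow\pi_*L_{\cV(\fp)}\Lambda^{\fp}M$ splits as a map of graded modules, and the support-theoretic ``one-point'' situation makes this algebraic splitting realizable in $\Stable_{kG}$. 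No Picard-group twist or Gross--Hopkins dualizing object is needed, and invoking one adds structure that you would then have to argue away. Your Part 1 is solid; Part 2 needs to be replaced by the direct homotopy-group analysis.
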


Finally, we consider the analogue of the generating hypothesis in $\StMod_{kG}$, which asserts that a map $f\colon M \to N$ between finitely generated modules is nullhomotopic, i.e., factors through a projective module, if and only if $\pi_*f = 0$. Based on earlier work of \cite{bccm_genhyp} in the $p$-group case, \cite{ccm_genhyp} gives a complete answer:

\begin{thm}[Benson--Carlson--Chebolu--Christensen--Min\'{a}\v{c}]
The generating hypothesis holds for $\StMod_{kG}$ if and only if the $p$-Sylow subgroup of $G$ is isomorphic to either $C_2$ or $C_3$. 
\end{thm}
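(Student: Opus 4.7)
The plan is to reduce first to the case where $G$ is a $p$-group, and then analyze that case via the representation theory of $kP$. Let $P\le G$ be a Sylow $p$-subgroup. Because $[G:P]$ is invertible in $k$, the trivial module $k$ is a direct summand of $\Ind_P^G k$ in $\Mod_{kG}$, so the restriction functor $\Res_P^G\colon\StMod_{kG}\to\StMod_{kP}$ is faithful on morphisms, and a transfer argument shows that $\widehat{H}^*(G;-)$ is a natural summand of $\widehat{H}^*(P;\Res_P^G(-))$. Combining these, the generating hypothesis in $\StMod_{kG}$ is equivalent to the generating hypothesis in $\StMod_{kP}$, so I am reduced to classifying those finite $p$-groups $P$ for which $\StMod_{kP}$ satisfies GH.

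For the positive direction ($P\in\{C_2,C_3\}$), I would use that $kC_p = k[s]/s^p$ has nonprojective indecomposables $J_i = k[s]/s^i$ for $1\le i\le p-1$. For $p=2$ only $k = J_1$ survives in $\StMod_{kC_2}$, and for $p=3$ we have $k$ and $J_2 = \Omega k\simeq\Sigma^{-1}k$. In both cases every compact object of $\StMod_{kC_p}$ is a finite direct sum of shifts of $k$, and $[\Sigma^a k,\Sigma^b k]_{\StMod_{kC_p}} = \widehat{H}^{b-a}(C_p;k)$. A direct check shows that composition agrees with the graded multiplication on $\widehat{H}^*(C_p;k)$, so $\pi_*$ is manifestly faithful on compacts.

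For the negative direction ($P\notin\{C_2,C_3\}$) I would construct explicit ghost maps, i.e., nonzero stable maps killed by $\pi_*$. For cyclic $P = C_{p^n}$ with $p^n\ge 4$, the key new feature is an indecomposable Jordan block $J_i$ whose $\Omega$-orbit under Heller's operator is disjoint from that of $k$ (already visible for $C_4$ via the $\Omega$-self-dual $J_2$); such a compact admits self-maps whose image lies so deep in the radical filtration that any composite $\Sigma^m k\to J_i\to J_i$ lands in the socle of $J_i$ and is killed by postcomposition with every positive-degree Tate class, making it a ghost. For non-cyclic $P$, I would pass to the elementary abelian quotient $P\twoheadrightarrow P/\Phi(P)$ of rank $\ge 2$ (with $\Phi(P)$ the Frattini subgroup) and inflate a ghost built in $\StMod_{kE}$ for $E = C_p\times C_p$ using Carlson's $L_\zeta$-modules attached to positive-degree classes $\zeta\in H^{>0}(E;k)$: now $\widehat{H}^*(E;k)$ has Krull dimension $\ge 2$ and admits the zero divisors and annihilators needed to cook up a compact ghost.

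The main obstacle is the non-cyclic case, where one must combine the Benson--Iyengar--Krause support-variety classification recalled above with concrete module-theoretic constructions; I would use that classification to locate a prime $\fp\in\Proj(H^*(E;k))$ of positive height and realize a ghost supported at $\fp$ via Carlson's $L_\zeta$ machinery. The positive cases $C_2, C_3$ are by contrast essentially forced, since these are the only finite $p$-groups for which $\StMod_{kC_p}$ is generated by $k$ alone under shifts and sums, letting the triangulated structure be fully captured by graded modules over $\widehat{H}^*(C_p;k)$.
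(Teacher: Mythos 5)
The paper itself gives no proof of this theorem --- it is quoted from Benson--Carlson--Chebolu--Christensen--Min\'a\v{c}, whose argument for the general (non-$p$-group) case runs through Auslander--Reiten theory --- so I am judging your sketch on its own merits, and it has two genuine gaps. The first is the claimed equivalence of the generating hypothesis for $kG$ and for a Sylow subgroup $P$. Faithfulness of $\Res^G_P$ plus the transfer retraction only give the easy direction: a ghost over $kP$ induces up to a ghost over $kG$ (by Frobenius reciprocity $[\Omega^nk,\Ind_P^G g]_{kG}\cong[\Omega^nk,g]_{kP}$, and $\Ind_P^G g\neq 0$ by the Mackey formula), so GH for $kG$ forces GH for $kP$. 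The converse, which is exactly what you need for the ``if'' direction, does not follow: if $\pi_*^Gf=0$ it does \emph{not} follow that $\pi_*^P(\Res f)=0$, since by adjunction $[\Omega^nk,\Res f]_{kP}\cong[\Omega^n\Ind_P^Gk,f]_{kG}$ and $\Ind_P^Gk$ has summands other than $k$; your transfer statement points in the wrong direction (it shows vanishing over $P$ implies vanishing over $G$, not conversely). Indeed, for the naive formulation over all finitely generated modules the converse is simply false: any nonprojective module $M$ in a non-principal block (e.g.\ for $C_6$ at $p=2$) has $\widehat{H}^*(G;M)=0$, so $\id_M$ is a nonzero ghost even though the Sylow subgroup is $C_2$. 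The correct theorem is about the thick subcategory generated by $k$ in $\stmod_{kG}$, and even there the passage from $C_2$, $C_3$ Sylow subgroups to general $G$ is the substantive content of the Carlson--Chebolu--Min\'a\v{c} paper (via almost split sequences), not a formal restriction/transfer argument.

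The second gap is in the negative direction for non-cyclic $p$-groups. Inflation along $P\twoheadrightarrow P/\Phi(P)$ does not obviously preserve either stable nontriviality or ghostliness: the syzygies $\Omega^n_Pk$ are not inflated from $\Omega^n_{P/\Phi(P)}k$, so a map killed by $\pi_*$ over the quotient need not be killed by $\pi_*$ over $P$, and a stably nontrivial map can in principle become trivial. The standard and correct move is the opposite one: every $p$-group other than $C_2$, $C_3$ contains a subgroup isomorphic to $C_{p^n}$ with $p^n\geq 4$ or to $C_p\times C_p$; produce a ghost there (your socle-level self-map of $J_2$ over a cyclic group of order $\geq 4$ is fine, and for $C_p\times C_p$ one can use $L_\zeta$-type or AR-sequence constructions) and then \emph{induce} it up, since induction preserves ghosts by Frobenius reciprocity and is stably faithful by Mackey. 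Your positive case ($C_2$, $C_3$ as $p$-groups, every finitely generated module a sum of shifts of $k$ plus projectives, so $\pi_*$ is faithful) is correct and matches the known argument; the reduction step and the non-cyclic construction are where the proposal does not go through as written.
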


The techniques used in their proof, namely Auslander--Reiten theory, carry over to the chromatic setting to establish the failure of a $K(n)$-local analogue of the generating hypothesis \cite{barthel_ars}, thereby bringing us back to our starting point.

\biblio
\bibliography{bibliography}\bibliographystyle{plain}
\end{document}